\newtheorem{theorem}{Theorem}[section]
\newtheorem{lemma}[theorem]{Lemma}
\newtheorem{proposition}[theorem]{Proposition}
\newtheorem{corollary}[theorem]{Corollary}
\theoremstyle{definition}
\newtheorem{example}[theorem]{Example}
\newtheorem{remark}[theorem]{Remark}
\newcommand{\norm}[1]{\left\lVert#1\right\rVert}
\numberwithin{equation}{section}
\begin{document}

\title[Characteristic functions of $p$-adic integral operators]{Characteristic functions of $p$-adic integral operators}

\author{Pavel Etingof}

\address{Department of Mathematics, MIT, Cambridge, MA 02139, USA}

\author{David Kazhdan}

\address{Einstein Institute of Mathematics, Edmond J. Safra Campus,
  Givaat Ram The Hebrew University of Jerusalem, Jerusalem, 91904,
  Israel}

\dedicatory{To Jasper Stokman on his 50th birthday with admiration}

\begin{abstract} Let $P\in \Bbb Q_p[x,y]$, $s\in \Bbb C$ with sufficiently large real part, and consider the integral operator
$$
(A_{P,s}f)(y):=\frac{1}{1-p^{-1}}\int_{\Bbb Z_p}|P(x,y)|^sf(x) |dx|
$$
 on $L^2(\Bbb Z_p)$. We show that if $P$ is homogeneous of degree $d$ then for each character $\chi$ of $\Bbb Z_p^\times$ the characteristic function $\det(1-uA_{P,s,\chi})$ of the restriction $A_{P,s,\chi}$ of $A_{P,s}$ to the eigenspace $L^2(\Bbb Z_p)_\chi$ is the $q$-Wronskian of a set of solutions of a (possibly confluent) $q$-hypergeometric equation, where $q=p^{-1-ds}$. In particular, the nonzero eigenvalues of $A_{P,s,\chi}$ are the reciprocals of the zeros of such $q$-Wronskian. 
\end{abstract} 

\maketitle

\tableofcontents

\section{Introduction}

Let $P\in \Bbb Q_p[x,y]$ be a $p$-adic polynomial and $s\in \Bbb C$. We consider the operator 
\begin{equation}\label{aps}
(A_{P,s}f)(y)=\frac{1}{1-p^{-1}}\int_{\Bbb Z_p}|P(x,y)|^s f(x)|dx|
\end{equation} 
on the Hilbert space 
$H:=L^2(\Bbb Z_p)$ when ${\rm Re}(s)$ is not too negative. In this case
$A_{P,s}$ is a well-defined Hilbert-Schmidt (in particular, compact) operator. 
Therefore, its spectrum consists of eigenvalues $\lambda_i\ne 0$ 
(if present) and also $0$, which may or may not be an eigenvalue. Moreover, if $|P(x,y)|=|P(y,x)|$
and $s\in \Bbb R$ then $A_{P,s}$ is self-adjoint, hence $\lambda_i\in \Bbb R$ and we have a spectral decomposition $H={\rm Ker}A_{P,s}\oplus \bigoplus_i H(\lambda_i)$, where $H(\lambda_i)$ is the $\lambda_i$-eigenspace of $A_{P,s}$ (\cite{L}). 
The goal of this paper is to compute $\lambda_i$. 

We mostly focus on the case when $P$ is homogeneous of degree $d$. Then $A_{P,s}$ commutes with the group of units $\Bbb Z_p^\times$, hence preserves each eigenspace $H_\chi\subset H$ of $\Bbb Z_p^\times$ corresponding to a character $\chi: \Bbb Z_p^\times\to \Bbb C^\times$. Denote the restriction of $A_{P,s}$ to $H_{\chi}$ by $A_{P,s,\chi}$. If 
${\rm Re}(s)$ is sufficiently large then $A_{P,s,\chi}$ is trace class with 
exponential decay of eigenvalues, so we can define 
its characteristic function -- the Fredholm determinant 
$$
h_{P,s,\chi}(u):=\det (1-uA_{P,s,\chi})=\prod_i (1-u\lambda_i)^{{\rm mult}(\lambda_i)},
$$
where ${\rm mult}(\lambda_i)$ is the algebraic multiplicity of $\lambda_i$. Then $h_{P,s,\chi}(u)$ is 
an entire function of Hadamard order $0$ whose zeros are the reciprocals of $\lambda_i$ (\cite{L}). 

Our main result is the following theorem.  
Let $P(x,y):=x^{d-l}y^lQ(\frac{x}{y})$, where $Q(0)=1$. 
Let $k:=k_0+\delta_{1,\chi}$, where $k_0$ is the order of the pole at $0$
of the 2-variable zeta-function $Z(Q,\chi,s,z)$ defined by \eqref{zeq}.
Define {\bf the $q$-Wronski matrix} of 
a collection of functions $f_1,...,f_k$ of a complex variable $u$ 
to be the $k$-by-$k$ matrix with entries  
$$
W(f_1,...,f_k)(u)_{ji}:=f_i(q^{j-1}u).
$$
Finally, let $\Phi_i$ be the solutions of the $q$-hypergeometric equation defined by \eqref{Phiieq}, 
and 
$$
f_i(u):=(u;q)_\infty \Phi_i(u),
$$ where 
$$
(u;q)_\infty:=\prod_{n=0}^\infty (1-uq^n);
$$ 
then $u^{-\nu_i}f_i(u)$ are entire functions taking value $1$ at the origin. 

\begin{theorem}\label{th1} The characteristic function $h_{P,s,\chi}$ 
is a limit of functions of the form
$$
h(u)=\frac{(\beta u)^{-\sum_{i=1}^k \nu_i}}{\prod_{1\le i<j\le k}(q^{\nu_j}-q^{\nu_i})}\det W(f_1,...,f_k)(\beta u).
$$ 
as $\nu_1,...,\nu_{k_0}\to +\infty$ and $\beta\to \infty$ if $k_0>0$, where $q:=p^{-1-ds}$. 
\end{theorem}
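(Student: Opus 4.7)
The strategy is to use the homogeneity of $P$ to derive a $q$-difference equation for $h_{P,s,\chi}(u)$, identify this equation with the (possibly confluent) $q$-hypergeometric equation of~\eqref{Phiieq}, and then express the Fredholm determinant as an explicit $q$-Wronskian of a fundamental system of entire solutions.

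The first step is to write out $A_{P,s,\chi}$ in a natural basis of $H_\chi$ indexed by the $p$-adic valuation $n\in\mathbb{Z}_{\geq 0}$, using $\mathbb{Z}_p^\times$-equivariance to reduce each annular integration to a single representative in $\{|x|=p^{-n}\}$. Because $P(x,y)=x^{d-l}y^l Q(x/y)$ with $Q(0)=1$, the resulting matrix entries are partial integrals of $|Q(t)|^s\chi(t)$ over shrinking balls, and their generating function in the valuation is precisely the two-variable zeta function $Z(Q,\chi,s,z)$ of~\eqref{zeq}; its pole structure at $z=0$, of order $k_0$, controls the rank of the ``essential'' part of the operator and, eventually, the order of the $q$-difference equation that $h_{P,s,\chi}$ will satisfy.

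Next, I would exploit the scaling symmetry $(x,y)\mapsto (px,py)$, under which $|P(x,y)|^s|dx|$ rescales by $p^{-sd-1}=q$. This provides a contiguity between $A_{P,s,\chi}$ and its restriction to the subspace of functions vanishing near $0$, which, once transcribed on the Fredholm determinant, yields a linear $q$-difference equation of order $k=k_0+\delta_{1,\chi}$ for $h_{P,s,\chi}(u)$; the extra $\delta_{1,\chi}$ accounts for the constant function contributing an additional solution mode when $\chi$ is trivial. By comparing coefficients, this equation is identified with~\eqref{Phiieq} after conjugating by $(u;q)_\infty$, so its space of entire solutions is spanned by $f_1,\dots,f_k$. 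Since $h_{P,s,\chi}(u)$ is itself entire of Hadamard order $0$ with $h(0)=1$, it lies in this span; applying the standard $q$-Wronskian identity and matching initial values at $u=0$ then produces both the Vandermonde-type denominator $\prod_{i<j}(q^{\nu_j}-q^{\nu_i})$ and the prefactor $(\beta u)^{-\sum_i\nu_i}$.

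The main obstacle is the confluence limit when $k_0>0$. In that regime the $q$-hypergeometric equation with genuinely finite parameters does not directly produce a $k$-dimensional space of entire solutions of the exact type needed to match $h_{P,s,\chi}$; instead one must embed the problem in a family depending on $\nu_1,\dots,\nu_{k_0}$ together with a dilation $\beta$, and recover the desired formula only in the limit $\nu_i\to+\infty$, $\beta\to\infty$. Although each individual $f_i(\beta u)$ may blow up, the normalized quantities $u^{-\nu_i}f_i(\beta u)$ remain entire with value $1$ at the origin, so the quotient in the theorem stays well-defined. Establishing uniform convergence on compact subsets of the $u$-plane --- and verifying that this limit commutes with the Fredholm determinant --- is the delicate analytic point, and will require the explicit power series expansions of the $\Phi_i$ together with the Hadamard order-$0$ bound on $h_{P,s,\chi}$ to control tails.
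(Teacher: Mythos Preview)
Your proposal contains a genuine conceptual gap in the second step. You claim that the scaling symmetry $(x,y)\mapsto(px,py)$ produces a linear $q$-difference equation of order $k$ in $u$ for the characteristic function $h_{P,s,\chi}(u)$ itself, and that after conjugation by $(u;q)_\infty$ this coincides with~\eqref{Phiieq}, so that $h$ is a \emph{linear combination} of $f_1,\dots,f_k$. This is not how the Wronskian arises, and for $k\ge 2$ the claim is false: the theorem asserts that $h$ is (a limit of) the \emph{determinant} of the $q$-Wronski matrix of $f_1,\dots,f_k$, which is a degree-$k$ polynomial expression in the $f_i$ and their $q$-shifts, not an element of their linear span. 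A Wronskian of $k$ solutions of an order-$r$ equation (here typically $r>k$) does not satisfy that equation.

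The paper's route is different in kind. After your first step (which is correct and matches Section~3), Proposition~\ref{analfun} realizes $A_{P,s,\chi}$ as an operator $B_R$ of the form $F(z)\mapsto R(z)F(qz)$ minus a rank-$k$ correction given by principal parts at the $k$ marked poles of $R$. The eigenvalue equation is then a first-order $q$-difference equation \emph{in the variable $z$} for the eigenvector $F(z)$, inhomogeneous with $k$ free constants $C_j=R_jF(qb_j^{-1})$. One iterates it explicitly to write $F$ as a ${}_{\ell+1}\varphi_r$ series in $z$ with parameter $u=\lambda^{-1}$, and then imposes the self-consistency conditions $C_i=R_iF(qb_i^{-1})$. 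This yields a $k\times k$ linear system $\sum_j C_j M_{ji}(\beta u)=0$, and it is the matrix $M$ that turns out to equal $V^{-1}DW(\Phi_1,\dots,\Phi_k)N D^{-1}$ for a Vandermonde $V$; hence $\det M=0$ becomes $\det W(\beta\lambda^{-1})=0$, and clearing poles by $(u;q)_\infty$ gives Proposition~\ref{Wro}. The $q$-hypergeometric equation~\eqref{Phiieq} governs the $\Phi_i$ (functions of $u$ arising from iterating in $z$), not $h$; the Vandermonde denominator comes from $\det V$, not from expressing $h$ in a basis. The confluence limit is then a degeneration of the poles $b_j^{-1}$ of $R$, not an analytic tail estimate on a basis expansion of $h$.
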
  

We note that such a limit can itself be written as a Wronskian of a set $f_1,...,f_k$ of solutions of a confluent $q$-hypergeometric difference equation. Also note that the order of this difference equation is typically bigger than $k$, so that $f_1,...,f_k$ don't span the space of its meromorphic solutions over the field of $q$-elliptic functions, so that the $q$-Wronskian does not factor explicitly. As a result, the eigenvalues of $A_{P,s}$ are typically given by {\it transcendental} functions of $p^s$, in contrast with Igusa's theorem \cite{I} that integrals $\int_{\Bbb Z_p^n}|P(\bold x)|^s|d\bold x|$ for a $p$-adic polynomial $P$ are always {\it rational} functions of $p^s$. We show, however, that for special values of parameters these eigenvalues may be algebraic. 

Theorem \ref{th1} is proved in Section 4. 
Our proof of Theorem \ref{th1} provides a method of computing the function $h_{P,s,\chi}$. This method is based on realizing the operator $A_{P,s,\chi}$ as a first order $q$-difference operator on a space of analytic functions. 

By the same method, analogous results can be obtained over a general non-archimedian local field $\bold F$, and for more general integral operators in which the character $w\mapsto |w|^s$ is replaced by any multiplicative character of $\bold F$. Further, they can be extended to integral operators whose kernels are 
of the form $\prod_{i=1}^m\chi_i(P_i(x,y))$ where $P_i$ are homogeneous polynomials and $\chi_i$ are multiplicative characters, and to linear combinations of such kernels.  Since these extensions are straightforward, we will not discuss them in detail. 

\begin{remark} One of our motivations for writing this paper was the desire to understand the eigenvalues of the {\bf $p$-adic Hecke operators} defined in \cite{BK}, in particular the operators $T_x$ discussed in \cite{K}, Theorem 2, which are the Hecke operators of \cite{BK} for $\Bbb P^1$ with 4 parabolic points and $G=PGL_2$. More precisely, in this paper we study what happens for $p$-adic integral operators``generically", which helps recognize, by contrast, the special features of the situation of \cite{K},\cite{BK}. Namely, as explained in \cite{K}, Subsection 2.4, this situation 
should belong to the realm of "$p$-adic integrable systems" (a notion which hasn't yet been precisely defined), and as a result the eigenvalues of $T_x$ are algebraic numbers,\footnote{This was checked 
in 2007 by M. Vlasenko in examples (\cite{V}) but the general proof has not been written down and will be given in our forthcoming joint paper with E. Frenkel.} in contrast with the present paper, where the eigenvalues are (almost certainly) transcendental.  
\end{remark} 

The organization of the paper is as follows. In Section 2 we discuss zeta-functions of univariate $p$-adic polynomials, and define a 2-variable zeta-function of such a polynomial, which is a rational function of variables $s$ and $z$ depending on a character of $\Bbb Z_p^\times$. In Section 3 we realize the operator $A_{P,s,\chi}$ 
for homogeneous $P$ as a first order $q$-difference operator on the space of analytic functions on a disk written in terms of the 2-variable zeta-function of the univariate polynomial corresponding to $P$. This allows us to write the equation for an eigenvector 
of $A_{P,s}$ as a first order $q$-difference equation. In Section 4 we recall 
the definition and properties of $q$-hypergeometric functions and then use them to solve the $q$-difference equation for an eigenvector of $A_{P,s,\chi}$. This allows us to express the characteristic function $h_{P,s,\chi}$ as a $q$-Wronskian of a set of entire solutions of a $q$-hypergeometric equation, thereby proving Theorem \ref{th1}. In Section 5 we give examples of explicit computation of $h_{P,s,\chi}$ for specific classes of homogeneous polynomials $P$. Finally, in Section 6 we consider the non-homogeneous case. 
We first show that the operator $A_{P,s}$ is trace class for sufficiently large ${\rm Re}(s)$, and then consider an example of a non-homogeneous $P$ and show that in this example the problem of finding eigenvalues of $A_{P,s}$ leads to more complicated functional equations, namely Mahler's recursions (\cite{AS,BCCD}). 

\vskip .1in

{\bf Acknowledgments.} We thank Maxim Kontsevich and Erik Koelink for useful comments. Pavel Etingof's work was partially supported by the NSF grant DMS - 1916120. The project was supported by David Kazhdan's ERC grant N$^\circ$ 669655.

\section{Zeta-functions}

Let $\chi$ be a character of $\Bbb Z_p^\times$. Let $Q\in \Bbb Q_p[x]$ 
be a polynomial with constant term $1$ of degree $r$. 
Let $s\in \Bbb C$ with ${\rm Re}(s)>-\frac{1}{r}$ if $r\ge 1$. Define the zeta-function
$$
\zeta(Q,\chi,s):=\frac{1}{1-p^{-1}}\int_{\Bbb Z_p^\times}|Q(x)|^s\chi(x)|dx|,
$$
where $|dx|$ is the additive Lebesgue measure in which the volume of $\Bbb Z_p$ is $1$. For example, if $|Q(x)|=a$ when $|x|=1$ then $\zeta(Q,\chi,s)=a^s\delta_{1,\chi}$. The function $\zeta(Q,\chi,s)$ is a special case of the Igusa zeta-function of $Q$ and therefore is rational in $p^s$ by the result of \cite{I}.  

Write $Q_m$ for the polynomial $Q_m(x):=Q(xp^{-m})$. Then 
$$
\frac{1}{1-p^{-1}}\int_{p^{-m}\Bbb Z_p^\times}|Q(x)|^s\chi(xp^{m})|dx|=p^{m}\zeta(Q_m,\chi,s).
$$

Let $c$ be the leading coefficient of $Q$. 
If $m\ll 0$ then $|Q_m(x)|=1$ when $|x|=1$, so $\zeta(Q_m,\chi,s)=\delta_{1,\chi}$. Likewise, if $m\gg 0$, we have $|Q_m(x)|=|c|p^{mr}$ when $|x|=1$, so $\zeta(Q_m,\chi,s)=|c|^sp^{mrs}\delta_{1,\chi}$. So if $|c|=1$ and $m$ is far enough from zero then 
\begin{equation}\label{zetaform}
\zeta(Q_m,\chi,s)=p^{\max(m,0)rs}\delta_{1,\chi}.
\end{equation} 

\begin{example}\label{noroots} Assume that $Q\in \Bbb Z_p[x]$ 
and its reduction $\overline Q$ to $\Bbb F_p[x]$ 
also has degree exactly $r$ and no roots in $\Bbb F_p$ (so $r\ge 2$). 
Then $|c|=1$, so $|Q(x)|=1$ when $|x|\le 1$ and $|Q(x)|=|x|^r$ when $|x|>1$. So  
$$
\zeta(Q_m,1,s)=p^{\max(m,0)rs}\delta_{1,\chi}.
$$
for all $m$. 
\end{example} 

\begin{example}\label{deg1} Let $Q(x)=1-x$, so $r=1$. If $|x|=1$ then $|1-p^{-m}x|=1$ if $m<0$ and $|1-p^{-m}x|=p^m$ if $m>0$. Thus formula \eqref{zetaform} holds for $m\ne 0$. For $m=0$, we compute: 
$$
(1-p^{-1})\zeta(1-x,\chi,s)=
\int_{\Bbb Z_p^\times}|1-x|\chi(x)|dx|=
$$
$$
\sum_{k=2}^{p-1}\int_{k+p\Bbb Z_p}
\chi(x)|dx|+\sum_{n\ge 1}p^{-ns}\sum_{k=1}^{p-1}\int_{1+kp^n+p^{n+1}\Bbb Z_p}\chi(x)|dx|.
$$
Thus for $\chi=1$ we get 
$$
\zeta(1-x,1,s)=1-\frac{p^{-1}(1-p^{-s})}{(1-p^{-1})(1-p^{-s-1})}.
$$
If $\chi\ne 1$ then let $\ell$ be the smallest integer such that $\chi$ is trivial on $1+p^\ell \Bbb Z_p$. Then we get 
$$
\zeta(1-x,\chi,s)=-\frac{p^{-1}(1-p^{-\ell(s+1)+1})}{(1-p^{-1})(1-p^{-s-1})}.
$$
So for all $\chi$ we get 
$$
\zeta(1-x,\chi,s)=\delta_{1,\chi}-\frac{p^{-1}(1-p^{-\ell(s+1)+1})}{(1-p^{-1})(1-p^{-s-1})},
$$
where for $\chi=1$ we set $\ell=0$. 
\end{example} 

Define the {\bf 2-variable zeta function} of $Q,\chi$ by 
\begin{equation}\label{zeq} 
Z(Q,\chi,s,z):=\sum_{m\in \Bbb Z} (\zeta(Q_m,\chi,s)-\delta_{1,\chi})z^m.
\end{equation} 
It is easy to see that this series is finite in the negative direction and converges for $|z|<\min(1,p^{-r{\rm Re}(s)})$ 
to a rational function of $p^s,z$ which has the form 
\begin{equation}\label{z0eq} 
Z(Q,\chi,s,z)=Z_0(Q,\chi,s,z)+\delta_{1,\chi}\left(\frac{|c|^s}{1-p^{rs}z}-\frac{1}{1-z}\right),
\end{equation} 
where $Z_0\in \Bbb C(p^s)[z,z^{-1}]$ is a Laurent polynomial. For instance, in Example \ref{noroots} we have $Z_0=0$, while in Example \ref{deg1} $Z_0=-\frac{p^{-1}(1-p^{-\ell(s+1)+1})}{(1-p^{-1})(1-p^{-s-1})}$ is independent of $z$. However, it is clear that in general $Z_0$ may contain any integer power of $z$. 

\section{Realization of $A_{P,s,\chi}$ on analytic functions} 

Let $P\in \Bbb Q_p[x,y]$ be a homogeneous polynomial of degree $d$. 
Without loss of generality we may (and will) assume that 
$$
P(x,y)=x^{d-l}y^lQ(\tfrac{x}{y}), 
$$
where $Q$ is a polynomial with constant term $1$, of some degree 
$r\le l$. Let $s\in \Bbb C$, and consider the operator $A_{P,s,\chi}$ given by \eqref{aps} acting on $H_\chi$. We are interested in nonzero eigenvalues of $A_{P,s,\chi}$. 

Recall that for two linear endomorphisms $B,C$ of a vector space, nonzero eigenvalues of $BC$ and $CB$ are the same. Thus we may assume without loss of generality that $l=d$. 

The space $H_\chi$ can be identified with the space $\ell_2$ of sequences $\lbrace f_n\rbrace$ such that $\sum_{n\ge 0} |f_n|^2<\infty$ by the assignment
$$
f_n=p^{-\frac{n}{2}}f(p^n),\ n\ge 0.
$$ 
In terms of this presentation, we have 
$$
(A_{P,s,\chi}f)(y)=\frac{|y|^{ds}}{1-p^{-1}}\sum_{n\ge 0}p^{\frac{n}{2}} f_n\int_{|x|=p^{-n}}|Q(\tfrac{x}{y})|^s\chi(xp^{-n})|dx|.
$$
Therefore, replacing $\frac{x}{y}$ with $x$ in the integral, we obtain
$$
(A_{P,s,\chi}f)_m=p^{-\frac{m}{2}}(A_{P,s,\chi}f)(p^m)=
$$
$$
\frac{p^{-\frac{3m}{2}-dms}}{1-p^{-1}}\sum_{n\ge 0}p^{\frac{n}{2}}f_n \int_{|x|=p^{m-n}}|Q(x)|^s\chi(xp^{m-n})|dx|.
$$
So we get that 
\begin{equation}\label{operseq}
(A_{P,s,\chi}f)_m=\sum_{n\ge 0}a_{mn}f_n\text{ where }
a_{mn}:=p^{-\frac{m+n}{2}-dms}\zeta(Q_{m-n},\chi,s).
\end{equation}

\begin{example}\label{noroots1} Assume that we are in the situation of Example \ref{noroots}. Then 
$$
a_{mn}=p^{-\frac{m+n}{2}-dms}p^{\max(m-n,0)rs}=
p^{-\frac{m+n}{2}-(d-r)ms-\min(m,n)rs}. 
$$
\end{example} 

Let us represent the sequence $f_n$ by the analytic function 
$$
F(z)=\sum_{n\ge 0} f_np^{\frac{n}{2}}z^n
$$ 
in the disc $|z|<p^{-\frac{1}{2}}$. This identifies $H_\chi$ with 
the space $\mathcal H$ of such analytic functions whose boundary values are 
in $L^2$, with the norm being the usual $L^2$ norm on the boundary. Multiplying \eqref{operseq} by $p^{\frac{m}{2}}z^m$ and adding up over $m\ge 0$, we get 
$$
(A_{P,s,\chi}F)(z)=\sum_{m\ge 0,n\ge 0} p^{-\frac{n}{2}-dms}\zeta(Q_{m-n},\chi,s)z^{m-n}f_nz^n=
$$
$$
\delta_{1,\chi}\frac{F(p^{-1})}{1-p^{-ds}z}+
\sum_{m\ge 0,n\ge 0} p^{-\frac{n}{2}-dms}(\zeta(Q_{m-n},\chi,s)-\delta_{1,\chi})z^{m-n}f_nz^n=
$$
$$
\delta_{1,\chi}\frac{F(p^{-1})}{1-p^{-ds}z}+
(Z(Q,\chi,s,p^{-ds}z)F(p^{-1-ds}z))_+
$$
where $G_+$ is the regular part of a Laurent series $G$. 

Thus we obtain the following proposition.\footnote{Note that Proposition \ref{analfun} continues to hold when $d$ is not necessarily an integer. Indeed, the function 
$|y|^d|Q(x/y)|$ makes sense for non-integer $d$.}

\begin{proposition}\label{analfun}
If $l=d$ then the operator $A_{P,s}$ on $H_\chi$ is equivalent to the operator 
on $\mathcal H$ given by the formula 
$$
(A_{P,s,\chi}F)(z)=
$$
$$
\delta_{1,\chi}\left(\frac{F(p^{-1})-F(p^{-1-ds}z)}{1-p^{-ds}z}+\frac{|c|^sF(p^{-1-ds}z)}{1-p^{-(d-r)s}z}\right)+
(Z_0(Q,\chi,s,p^{-ds}z)F(p^{-1-ds}z))_+.
$$
\end{proposition}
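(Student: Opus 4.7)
The identity $(A_{P,s,\chi}F)(z) = \delta_{1,\chi}\frac{F(p^{-1})}{1-p^{-ds}z} + (Z(Q,\chi,s,p^{-ds}z)\,F(p^{-1-ds}z))_+$ is already derived in the calculation immediately preceding the proposition, so the plan is to take this as input and deduce the claim purely by substituting the decomposition \eqref{z0eq}. Applying \eqref{z0eq} with $z$ replaced by $p^{-ds}z$ yields
\[
Z(Q,\chi,s,p^{-ds}z) = Z_0(Q,\chi,s,p^{-ds}z) + \delta_{1,\chi}\left(\frac{|c|^s}{1-p^{-(d-r)s}z}-\frac{1}{1-p^{-ds}z}\right),
\]
which, when multiplied by $F(p^{-1-ds}z)$ and combined with the isolated $\delta_{1,\chi}\frac{F(p^{-1})}{1-p^{-ds}z}$, produces the $Z_0$-contribution to be kept inside $(\cdot)_+$ together with three $\delta_{1,\chi}$-summands to be simplified.

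The key step is to extract those $\delta_{1,\chi}$-summands from $(\cdot)_+$, which is legitimate as soon as each one is analytic at $z=0$. The summand $\frac{|c|^s F(p^{-1-ds}z)}{1-p^{-(d-r)s}z}$ has its only singularity at $z=p^{(d-r)s}\ne 0$ and so is manifestly a power series near the origin. For the remaining two summands $\frac{F(p^{-1})}{1-p^{-ds}z}$ and $-\frac{F(p^{-1-ds}z)}{1-p^{-ds}z}$, I would combine them into $\frac{F(p^{-1})-F(p^{-1-ds}z)}{1-p^{-ds}z}$ and verify that the apparent pole at $z=p^{ds}$ is removable: expanding $F(p^{-1-ds}z) = \sum_{n\ge 0} f_n p^{-n/2}(p^{-ds}z)^n$ gives $F(p^{-1})-F(p^{-1-ds}z) = \sum_{n\ge 0} f_n p^{-n/2}(1-(p^{-ds}z)^n)$, and each $1-(p^{-ds}z)^n$ is polynomially divisible by $1-p^{-ds}z$. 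Pulling these three summands outside $(\cdot)_+$ then produces exactly the formula in the statement.

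The main obstacle is not conceptual but one of bookkeeping, namely verifying that the argument of $(\cdot)_+$ is a formal Laurent series bounded below in $z$ so that the regular-part operation is well-defined. This follows from Section~2: the coefficients $\zeta(Q_k,\chi,s)-\delta_{1,\chi}$ vanish for $k\ll 0$, hence $Z_0(Q,\chi,s,p^{-ds}z)$ is a Laurent polynomial in $z$, while $F(p^{-1-ds}z)$ is an honest power series, so their product has only finitely many negative-degree terms in $z$ and $(\cdot)_+$ merely discards these. The same vanishing justifies the rearrangement of the original double sum $\sum_{m,n\ge 0} p^{-n/2-dms}(\zeta(Q_{m-n},\chi,s)-\delta_{1,\chi}) z^{m-n} f_n z^n$ into a convolution $(Z \cdot F)_+$, for ${\rm Re}(s)$ large enough that $A_{P,s}$ is trace class on $H_\chi$.
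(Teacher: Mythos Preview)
Your proposal is correct and follows exactly the route the paper takes: the paper derives the identity $(A_{P,s,\chi}F)(z)=\delta_{1,\chi}\frac{F(p^{-1})}{1-p^{-ds}z}+(Z(Q,\chi,s,p^{-ds}z)F(p^{-1-ds}z))_+$ in the paragraph preceding the proposition and then simply writes ``Thus we obtain the following proposition,'' leaving the substitution of \eqref{z0eq} and the extraction of the rational $\delta_{1,\chi}$-terms from $(\cdot)_+$ implicit. Your write-up makes this implicit step explicit; one small simplification is that the term $-\frac{F(p^{-1-ds}z)}{1-p^{-ds}z}$ is already a power series at $z=0$ for the same trivial reason as the $|c|^s$-term (geometric series), so it can be pulled out of $(\cdot)_+$ on its own before being combined with the outside summand $\frac{F(p^{-1})}{1-p^{-ds}z}$---no removable-pole argument is needed.
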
 

Conjugating this by $|x|^{d-l}$, we obtain a similar formula for general $l\in [r,d]$: 
$$
(A_{P,s,\chi}F)(z)=
$$
$$
\delta_{1,\chi}\left(\frac{F(p^{-1-(d-l)s})-F(p^{-1-ds}z)}{1-p^{-(d-l)s}z}+\frac{|c|^sF(p^{-1-ds}z)}{1-p^{-(l-r)s}z}\right)+
(Z_0(Q,\chi,s,p^{-ls}z)F(p^{-1-ds}z))_+.
$$
This immediately implies

\begin{corollary} 
If ${\rm Re}(s)>-\frac{1}{\max(2(d-l),2(l-r),d)}$ then the operator $A_{P,s,\chi}$ is well defined and trace class, with exponential decay of eigenvalues. Moreover, if $\chi\ne 1$ then this is so for ${\rm Re}(s)>-\frac{1}{d}$. 
\end{corollary}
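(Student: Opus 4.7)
The plan is to decompose $A_{P,s,\chi}$ according to the explicit formula on $\mathcal H$ given in Proposition~\ref{analfun}, extended to arbitrary $l\in[r,d]$ in the display immediately preceding the corollary, and then to analyze each constituent operator separately. Setting $q:=p^{-1-ds}$, the fundamental building block is the substitution operator $S_q\colon F(z)\mapsto F(qz)$, which acts diagonally in the monomial basis $\{z^n\}_{n\geq 0}$ of $\mathcal H$ with eigenvalues $q^n$. Consequently $S_q$ is trace class (in fact lies in every Schatten class) with singular values $|q|^n$ decaying exponentially, precisely when $|q|<1$, i.e., when $\mathrm{Re}(s)>-1/d$.

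The term $(Z_0(Q,\chi,s,p^{-ls}z)F(qz))_+$, present for every $\chi$, is the composition of $S_q$ with multiplication by a Laurent polynomial in $z$ followed by projection onto the regular part. Each of these is bounded on $\mathcal H$ (the backward shift $F\mapsto (z^{-1}F)_+$ has norm $\sqrt p$ by a direct coefficient calculation), so the composition inherits trace-class status and exponential decay of singular values from $S_q$. For $\chi\neq 1$ only this term appears, so the weaker condition $\mathrm{Re}(s)>-1/d$ alone suffices, proving the second assertion of the corollary. When $\chi=1$ two further summands must be handled. The summand $|c|^s F(qz)/(1-p^{-(l-r)s}z)$ is $S_q$ followed by multiplication by a function whose unique pole $z=p^{(l-r)s}$ lies outside the closed disc $|z|\leq p^{-1/2}$ iff $\mathrm{Re}(s)>-1/(2(l-r))$, under which condition multiplication is bounded on $\mathcal H$ and the composition is trace class with exponential decay. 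The summand $(F(p^{-1-(d-l)s})-F(qz))/(1-p^{-(d-l)s}z)$ splits into a rank-one operator $F\mapsto F(p^{-1-(d-l)s})\cdot(1-p^{-(d-l)s}z)^{-1}$ and a copy of $S_q$ composed with multiplication by $-(1-p^{-(d-l)s}z)^{-1}$; both pieces require the evaluation point $p^{-1-(d-l)s}$ to lie in the open disc and the pole $z=p^{(d-l)s}$ to lie outside the closed disc, which both reduce to the condition $\mathrm{Re}(s)>-1/(2(d-l))$ (vacuous when $d=l$). Intersecting the three resulting half-planes produces exactly $\mathrm{Re}(s)>-1/\max(d,\,2(d-l),\,2(l-r))$. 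That trace class and exponential decay of singular values are preserved under finite sums follows from the standard inequalities $s_{n+k}(A+B)\leq s_n(A)+s_k(B)$, and exponential decay of eigenvalues then follows from Weyl's bound $|\lambda_n(A)|\leq s_n(A)$.

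The main obstacle I anticipate is verifying that the apparently singular expression $(F(p^{-1-(d-l)s})-F(qz))/(1-p^{-(d-l)s}z)$ genuinely defines an element of $\mathcal H$: since $F(p^{-1-(d-l)s})$ need not agree with $F(p^{-1-ls})$, the numerator does not vanish at the pole $z=p^{(d-l)s}$ of the denominator, so the quotient really has a pole there and one must confirm that this pole lies outside the closed disc of radius $p^{-1/2}$ precisely under the stated hypothesis. The remainder of the argument is routine Hardy-space bookkeeping together with the elementary observation that the backward shift on $\mathcal H$ is bounded.
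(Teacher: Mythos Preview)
Your argument is essentially what the paper has in mind when it writes ``This immediately implies'': decompose the explicit formula for $A_{P,s,\chi}$ on $\mathcal H$ into the rank-one evaluation piece, the multiplication-by-rational-function pieces, and the substitution $F\mapsto F(qz)$, and observe that the latter is trace class with geometrically decaying singular values once $|q|<1$, while the former are bounded precisely when the relevant poles $p^{(d-l)s}$, $p^{(l-r)s}$ lie outside the closed disc of radius $p^{-1/2}$. Your identification of the three half-plane conditions and their intersection is correct, and your handling of the $Z_0$ term via the bounded backward shift is fine.

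One technical slip: the inequality you invoke at the end, ``$|\lambda_n(A)|\le s_n(A)$'', is false in general. What is true is Weyl's multiplicative inequality $\prod_{j=1}^n|\lambda_j|\le\prod_{j=1}^n s_j$, from which exponential decay of eigenvalues does follow: if $s_j\le Cr^j$ with $r<1$ then $|\lambda_n|^n\le|\lambda_1\cdots\lambda_n|\le C^n r^{n(n+1)/2}$, hence $|\lambda_n|\le Cr^{(n+1)/2}$. So your conclusion stands, but the justification needs this correction. The ``main obstacle'' you flag is not actually an obstacle: you are right that the numerator need not vanish at $z=p^{(d-l)s}$ (it equals $F(p^{-1-(d-l)s})-F(p^{-1-ls})$ there), but the corollary only asserts well-definedness under the hypothesis that this pole lies outside the closed disc, which is exactly the condition $\mathrm{Re}(s)>-1/(2(d-l))$ you derived.
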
 

So we will assume from now on that ${\rm Re}(s)$ varies in this range. 

\section{$q$-hypergeometric functions and proof of Theorem \ref{th1}}

\subsection{Basic hypergeometric series} 

The general {\bf basic hypergeometric series} is defined by the formula 
$$
{}_\ell\phi_{r-1}(a_1,...,a_\ell; b_1,...,b_{r-1}; q,u)=\sum_{n=0}^\infty 
\frac{(a_1;q)_n...(a_\ell;q)_n}{(b_1;q)_n...(b_{r-1},q)_n(q;q)_n}\left((-1)^nq^{\frac{n(n-1)}{2}}\right)^{r-\ell}u^n,
$$
where $\ell\le r$ and $(a,q)_n$ is the $q$-Pochhammer symbol: 
$$
(a,q)_n:=(1-a)(1-aq)...(1-aq^{n-1}),
$$
see \cite{GR}. We assume that $|q|<1$ and $b_j$ 
are not equal to a nonpositive integer power of $q$. 
Then this series converges for $|u|<1$ if $r=\ell$ and 
for all $u$ if $r>\ell$. 

There is also Bailey's variant of the basic hypergeometric series 
$$
{}_\ell\varphi_{r-1} (a_1,...,a_\ell; b_1,...,b_{r-1}; q,u)=\sum_{n=0}^\infty 
\frac{(a_1;q)_n...(a_\ell;q)_n}{(b_1;q)_n...(b_{r-1},q)_n(q;q)_n}u^n,
$$
which converges in the disc $|u|<1$ for all $\ell,r$. 
The two versions agree if $r=\ell$, and 
${}_\ell\varphi_{r-1}$ can be obtained from 
${}_m\phi_{m-1}$ with $m=\max(\ell,r)$ by specializing some parameters to $0$. 
Also, ${}_\ell\phi_{r-1}$ can be obtained from ${}_{r}\varphi_{r-1}$ by 
sending $a_{\ell+1},...,a_{r}$ to $\infty$. 

\subsection{The $q$-hypergeometric difference equation}
It is easy to see that ${}_{\ell}\varphi_{r-1}$ satisfies the difference equation 
\begin{equation}\label{hdiff}
u(1-a_1T)...(1-a_\ell T)\Phi(u)=(1-b_1q^{-1}T)...(1-b_{r-1}q^{-1}T)(1-T)\Phi(u),
\end{equation} 
where $(T\Phi)(u):=\Phi(qu)$. This equation is called the {\bf $q$-hypergeometric difference equation}.

We will mostly use the special case $r=\ell$, when $\phi=\varphi$ and 
$$
{}_{\ell}\phi_{\ell-1}(a_1,...,a_{\ell}; b_1,...,b_{\ell-1}; q,u)=\sum_{n=0}^\infty 
\frac{(a_1;q)_n...(a_{\ell};q)_n}{(b_1;q)_n...(b_{\ell-1},q)_n(q;q)_n}u^n.
$$ 
Equation \eqref{hdiff} immediately implies that ${}_\ell\phi_{\ell-1}$ is meromorphic in the whole complex plane with at most simple poles at the points $1,q^{-1},q^{-2},...$, present for generic parameter values. 

\begin{example} The {\bf Heine hypergeometric function} 
${}_2\phi_1(a,b;c;q,u)$ satisfies the equation 
$$
u(1-aT)(1-bT)\Phi(u)=(1-cq^{-1}T)(1-T)\Phi(u).
$$
\end{example} 

One may also consider a more general $q$-hypergeometric equation 
\begin{equation}\label{hdiff1}
u(1-a_1T)...(1-a_\ell T)\Phi(u)=(1-b_1T)...(1-b_{r}T)\Phi(u).
\end{equation} 
If $a_i,b_j\ne 0$, this equation is regular at $0$ iff $r\ge \ell$ and at $\infty$ iff $r\le l$. 
If $b_j=q^{-\nu_j}$ are distinct, equation \eqref{hdiff1} has $r$ solutions with 
power behavior at $0$:
\begin{equation}\label{Phii}
\Phi_{\ell,r,i}(a_1,...,a_\ell;b_1,...,b_r;q,u):=u^{\nu_i}{}_\ell \varphi_{r-1}(\tfrac{a_1}{b_i},...,\tfrac{a_\ell}{b_i}; \tfrac{qb_1}{b_i},...,\tfrac{qb_r}{b_i}; q,u),
\end{equation} 
where the $i$-th term in the $b$-list is omitted. These solutions form a basis of solutions 
if and only if $r\ge \ell$ (or, equivalently, the equation is regular at $0$). 

\subsection{Confluent limits} 
The $q$-hypergeometric difference equation has many interesting limits when various parameters go to $0$ and $\infty$ in various coordinated ways, which are called {\bf confluent limits}. Under these limits, the $q$-hypergeometric equation turns into 
{\bf confluent hypergeometric equations} which may be irregular at $0$ and $\infty$, and 
$q$-hypergeometric functions turn into {\bf confluent $q$-hypergeometric functions}. 
For example, the $q$-hypergeometric functions ${}_\ell \varphi_{r-1}$  
and ${}_\ell \phi_{r-1}$ are confluent unless $\ell=r$. 

A basic example of a confluent $q$-hypergeometric function is the entire function 
\begin{equation}\label{functionJ} 
J(a,q,u):=\sum_{n=0}^\infty \frac{(-1)^nq^{\frac{n(n-1)}{2}}u^n}{(a;q)_n(q;q)_n}={}_1\phi_1(0;a;q,u). 
\end{equation} 
which will arise in the next section. This function satisfies the {\bf $q$-Bessel difference equation} 
$$
-uT\Phi(u)=(1-aq^{-1}T)(1-T)\Phi(u)
$$
and expresses in terms of the {\bf Hahn-Exton $q$-Bessel function}
$$
J_\nu(x;q):=x^\nu\frac{(q^{\nu+1};q)_\infty}{(q,q)_\infty}J(q^{\nu+1},q,qx^2)
$$
also known as {\bf the third Jackson $q$-Bessel function}, see \cite{KSw, KS,GR}. 

\subsection{The spectrum of a first order difference operator and proof of Theorem \ref{th1}} 

Let $\mathcal H$ be the Hilbert space of analytic functions in the disk $|z|<1$ with $L^2$ boundary values, with norm
$$
\norm{F}^2=\frac{1}{2\pi}\int_{|z|=1}|F(z)|^2d\theta. 
$$ 
For a meromorphic function $G$ at a point $a\in \Bbb C$, let 
${\rm pr}(G,a)$ be the principal part of $G$ at $a$ (a polynomial in $\frac{1}{z-a}$ with zero constant term).   
Let $q\in \Bbb C, |q|<1$. Let $R(z)\in \Bbb C(z)$ be a nonzero rational function
equipped with a subset $\lbrace{z_1,...,z_k\rbrace}$ 
of its poles in the disk $|z|<|q|^{-1}$ that we call {\bf marked},  
which includes all its poles in the smaller disk $|z|\le 1$. 

Define an operator on $\mathcal H$ by the formula 
$$
(B_{R}F)(z)=R(z)F(qz)-\sum_{j=1}^k {\rm pr}(R(z)F(qz),z_j).
$$
It is easy to see that this operator is well defined and trace class. The goal of this subsection is to compute the characteristic function $h_R(u)=\det (1-uB_R)$ of $B_R$. 
This will imply Theorem \ref{th1}, as the operator $A_{P,s,\chi}$ is a special case of $B_R$. 

We start with the case when $z_1,...,z_k$ are simple poles and $z_i\ne 0$ for all $i$. 
Let $b_i=z_i^{-1}$. Then 
$$
R(z)=\beta \frac{(1-a_1z)...(1-a_\ell z)}{(1-b_1z)...(1-b_r z)}
$$ 
for some $r\ge k$, $\beta\ne 0$. Let $b_j^{-1}R_j$ 
be the residues of $R$ at the poles $z_j$, $j\in [1,k]$; so we have 
$$
R_j=-\beta\frac{(1-\frac{a_1}{b_j})...(1-\frac{a_\ell}{b_j})}{(1-\frac{b_1}{b_j})...(1-\frac{b_r}{b_j})},
$$ 
where the $j$-th factor in the denominator is omitted. Now the operator $B_R$ takes the form 
$$
(B_RF)(z)=\beta\frac{(1-a_1z)...(1-a_\ell z)}{(1-b_1z)...(1-b_r z)}F(qz)+\sum_{j=1}^k\frac{R_jF(qb_j^{-1})}{1-b_jz}.
$$
So the eigenvalue equation for $B_R$ looks like
$$
\lambda F(z)=\beta\frac{(1-a_1z)...(1-a_\ell z)}{(1-b_1z)...(1-b_r z)}F(qz)+\sum_{j=1}^k\frac{C_j}{1-b_jz},
$$
where $C_j:=R_jF(qb_j^{-1})$. 

Consider the difference equation 
$$
F(z)=u\frac{(1-a_1z)...(1-a_\ell z)}{(1-b_1z)...(1-b_r z)}F(qz)+\frac{1}{1-b_jz}.
$$
For small $u$ it has a unique power series solution, obtained by iterating the  
equation: 
$$
F_j(z,u)=\frac{1}{1-b_jz}+u\frac{(1-a_1z)...(1-a_\ell z)}{(1-b_1z)...(1-b_r z)} \frac{1}{1-b_jqz}+...=
$$
$$
\frac{1}{1-b_jz}\cdot {}_{\ell+1}\varphi_r(a_1z,...,a_\ell z,q; b_1z,...,qb_jz,...,b_rz;q,u).
$$
So for the eigenvector we have 
$$
F(z,\lambda^{-1})=\sum_{j=1}^k\lambda^{-1}C_jF_j(z,\beta\lambda^{-1} )=
$$
\begin{equation}\label{eigvec}
\sum_{j=1}^k \frac{\lambda^{-1}  C_j}{1-b_jz}\cdot {}_{\ell+1}\varphi_r(a_1z,...,a_\ell z,q; b_1z,...,qb_jz,...,b_rz;q,\beta\lambda^{-1} ).
\end{equation} 
So we get 
$$
C_i=R_iF(qb_i^{-1})=
$$
$$
\sum_{j=1}^k \frac{\lambda^{-1}R_iC_j}{1-qb_i^{-1}b_j}\cdot {}_{\ell+1}\varphi_r(\tfrac{qa_1}{b_i},...,\tfrac{qa_\ell}{b_i},q; \tfrac{qb_1}{b_i},...,\tfrac{q^2b_j}{b_i},...,\tfrac{qb_r}{b_i};q,\beta\lambda^{-1} ). 
$$
This can be written as 
$$
\sum_{j=1}^k C_jM_{ji}(\beta\lambda^{-1} )=0,
$$
where 
$$
M_{ii}(u):={}_\ell\varphi_{r-1}(\tfrac{a_1}{b_i},...,\tfrac{a_\ell}{b_i}; \tfrac{b_1}{b_i},...,\tfrac{b_r}{b_i};q,u),
$$
 and for $j\ne i$
$$
M_{ji}(u):=\frac{{}_{\ell}\varphi_{r-1}(\tfrac{a_1}{b_i},...,\tfrac{a_\ell}{b_i}; \tfrac{b_1}{b_i},...,\tfrac{qb_j}{b_i},...,\tfrac{b_r}{b_i};q,u)-{}_{\ell}\varphi_{r-1}(\tfrac{a_1}{b_i},...,\tfrac{a_\ell}{b_i}; \tfrac{b_1}{b_i},...\tfrac{qb_j}{b_i},...,\tfrac{b_r}{b_i};q,qu)}{1-\frac{b_j}{b_i}}, 
$$
where in both cases the $i$-th term in the $b$-list is omitted. 
Hence
$$
M_{ji}(u)=\frac{u^{-\nu_i}\prod_{m\ne j}(1-b_mT)}{\prod_{m\ne i}(1-\frac{b_m}{b_i})}
\Phi_i(u),
$$
where $m$ varies in $[1,k]$ and 
\begin{equation}\label{Phiieq}
\Phi_i(u):=\Phi_{\ell,r,i}(a_1,...,a_\ell;b_1,...,b_k,q^{-1}b_{k+1},...,q^{-1}b_r;q,u)
\end{equation} 
are the solutions of the $q$-hypergeometric difference equation defined by formula \eqref{Phii} (with shifted parameters). 
Thus we have a $k$-by-$k$ matrix $M(u)$,  
and the eigenvalues of $B_R$ are solutions of the equation 
$$
\det M(\beta\lambda^{-1} )=0.
$$

The matrix $M(u)$ expresses in terms of the $q$-Wronski matrix of 
the functions $\Phi_1,...,\Phi_k$. 
Indeed, the $q$-Wronski matrix $W=W(\Phi_1,...,\Phi_k)$ has entries
$$
W_{ni}(u)=T^{n-1}\Phi_i(u).
$$
But by the Newton interpolation formula we have 
$$
T^{n-1}=\sum_{j=1}^kb_j^{1-n}\frac{\prod_{m\ne j}(1-b_mT)}{\prod_{m\ne j}(1-\frac{b_m}{b_j})}.
$$ 

Thus, setting $D:={\rm diag}(\prod_{m\ne i}(1-\tfrac{b_m}{b_i}))$ and 
$N(u):={\rm diag}(u^{-\nu_i})$, we have 
$$
M(u)=V^{-1}DW(u)N(u)D^{-1},
$$
where $V$ is the Vandermonde matrix with entries 
$$
V_{nj}=b_j^{1-n}.
$$
In particular, we find that the eigenvalues of $B_R$ are solutions of the equation 
$$
\det W(\beta\lambda^{-1})=0.
$$
If $k=r=\ell$, the matrix $W(u)$ is the $q$-Wronski matrix of a {\bf basis} of solutions 
of the $q$-hypergeometric difference equation, so its determinant factorizes, 
resulting in the eigenvalues being of the form $\frac{\prod_i a_i}{\prod_i b_i}q^n$, $n\ge 0$. This is not surprising since in this case the matrix of the operator $B_R$ is triangular, so the eigenvalues are just the diagonal entries. 

Now to obtain the characteristic function of $B_R$ we need to eliminate the poles 
of the function $\det W(\beta\lambda^{-1} )$ (i.e., ``clear denominators"). 
Since $q$-hypergeometric functions have first order poles at 
$1,q^{-1},q^{-2}$ etc., this is achieved by replacing 
$\Phi_i(u)$ by 
$$
f_i(u)=(u;q)_\infty \Phi_i(u). 
$$ 
The functions $f_i(u)$ satisfy a simple modification 
of the $q$-hypergeometric equation, and we have 

\begin{proposition}\label{Wro}  
\begin{equation}\label{equali}
\det(1-u\beta^{-1}B_R)=\frac{u^{-\sum_{i=1}^k \nu_i}}{\prod_{i<j}(q^{\nu_j}-q^{\nu_i})}\det W(f_1,...,f_k)(u).
\end{equation} 
\end{proposition}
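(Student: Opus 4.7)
The plan is to identify both sides of \eqref{equali} as entire functions of Hadamard order zero that are equal to $1$ at the origin and have coincident zero sets (counted with multiplicity), then invoke the uniqueness part of Hadamard's factorization theorem.

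First, I would verify that the right-hand side is entire and takes value $1$ at $u=0$. Writing $f_i(u) = u^{\nu_i}\tilde f_i(u)$ with $\tilde f_i$ entire and $\tilde f_i(0)=1$, the $(j,i)$ entry of the Wronski matrix equals $q^{(j-1)\nu_i}u^{\nu_i}\tilde f_i(q^{j-1}u)$. Factoring $u^{\nu_i}$ out of the $i$th column cancels the prefactor $u^{-\sum_i\nu_i}$, and the value at $u=0$ of the remaining determinant is the Vandermonde $\det[q^{(j-1)\nu_i}]_{j,i}=\prod_{i<j}(q^{\nu_j}-q^{\nu_i})$, which cancels the denominator. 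Hence the right-hand side is entire with value $1$ at the origin, matching the value of $\det(1-u\beta^{-1}B_R)$ there.

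Second, I would match the zero sets. From $M(u) = V^{-1}DW(\Phi_1,\ldots,\Phi_k)(u)N(u)D^{-1}$ derived in the preceding computation, $\det M(u) = (\det V)^{-1} u^{-\sum_i\nu_i}\det W(\Phi_1,\ldots,\Phi_k)(u)$; combined with the row-wise identity $f_i=(u;q)_\infty\Phi_i$, which gives $\det W(f_1,\ldots,f_k)(u)=\prod_{j=1}^k (q^{j-1}u;q)_\infty\cdot\det W(\Phi_1,\ldots,\Phi_k)(u)$, one obtains $\det M(u)$ as a nonzero constant multiple of $u^{-\sum_i\nu_i}\det W(f_1,\ldots,f_k)(u)/\prod_j (q^{j-1}u;q)_\infty$. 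The eigenvalue analysis shows that the nonzero zeros of $\det(1-u\beta^{-1}B_R)$ are exactly those of $\det M(u)$; the design of the $f_i$ ensures that the pole orders of $\det W(\Phi_1,\ldots,\Phi_k)$ at $u=q^{-m}$ cancel exactly against the zero orders of $\prod_j (q^{j-1}u;q)_\infty$, so that $\det W(f_1,\ldots,f_k)$ picks up no spurious zeros there and has its nontrivial zero locus precisely at the points $u=\beta\lambda^{-1}$. Multiplicities I would handle by deforming the parameters $a_i,b_j,\beta$ to a generic locus, on which all nonzero eigenvalues of $B_R$ are simple and the corresponding zeros of $\det M$ are simple, and then extending by the joint analytic dependence of both sides on these parameters.

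Finally, both sides are entire of Hadamard order zero: the left-hand side by general Fredholm theory for trace-class operators with exponential eigenvalue decay (cf.\ Section~3), and the right-hand side because in the non-confluent case $r=\ell$ the functions $f_i$ are entire of order zero, so the Wronski determinant of $k$ of them is as well. Two order-zero entire functions equal to $1$ at the origin and with the same zeros coincide. The main obstacle I expect is the bookkeeping at the would-be pole locus $u=q^{-m}$—confirming that the cancellation between the $(u;q)_\infty$ factors and the poles of $\Phi_i$ leaves $\det W(f_1,\ldots,f_k)$ with no extraneous zeros there—together with, in the confluent limits used in Theorem~\ref{th1}, keeping $\det W(f_1,\ldots,f_k)$ of order zero despite the possibly higher-order growth of individual $f_i$ (such as the Hahn--Exton $q$-Bessel $J$ appearing in Section~4).
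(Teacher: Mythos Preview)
Your approach is essentially the same as the paper's: both sides are entire of Hadamard order zero, equal $1$ at the origin, and share the same zero locus (via the eigenvalue analysis and the relation $M=V^{-1}DWND^{-1}$), so they must coincide. The paper's proof consists of exactly these two sentences; you have filled in considerably more detail on the Vandermonde normalization at $u=0$ and on the pole cancellation at $u=q^{-m}$, which is all to the good. One small remark: your restriction to ``the non-confluent case $r=\ell$'' when arguing order zero is unnecessary---Proposition~\ref{Wro} is stated in the generic setting (simple poles $z_i\neq 0$) with arbitrary $\ell,r$, and there the $u^{-\nu_i}f_i$ are already entire of order zero; the confluent limits enter only afterwards, in passing from the Proposition to Theorem~\ref{th1}.
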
 

\begin{proof} Since $\det V=\prod_{i<j}(q^{\nu_j}-q^{\nu_i})$, the two sides of \eqref{equali} 
are entire functions of Hadamard order $0$
with the same zeros, both having value $1$ at the origin. The Proposition follows. 
\end{proof}  

\begin{remark} This method also gives explicit q-hypergeometric formulas for eigenvectors of the operator $B_R$, which are given by \eqref{eigvec}, where 
$\bold C:=(C_1,...,C_k)$ is a null row-vector of the matrix $V^{-1}DW(\beta\lambda^{-1})$. Moreover, if 
the operator $B_R$ is self-adjoint, 
the orthogonality relations for the eigenvectors 
yield interesting identities with $q$-hypergeometric functions. 
\end{remark} 

Now, if the simple poles $z_j=b_j^{-1}$, $j\in [1,k]$ of $R$ are allowed collide and produce multiple poles, including one at the origin (which would give a fully general function $R(z)$) then the characteristic function of $B_R$ is given by a confluent limit of the formula of Proposition \ref{Wro}. 
But by Proposition \ref{analfun}, the operator $A_{P,s,\chi}$ is a special case of $B_R$, with $k=k_0+\delta_{1,\chi}$, where $k_0$ is the order of pole of $Z(Q,\chi,s,z)$ at the origin, i.e., is obtained from 
the generic situation by sending $\nu_1,...,\nu_{k_0}$ to $+\infty$ and $\beta\to \infty$ if $k_0>0$. Therefore Proposition \ref{Wro} and Proposition \ref{analfun} imply Theorem \ref{th1}. 

\begin{remark} The same results with obvious changes extend over any non-archimedian local field $\bold F$ instead of $\Bbb Q_p$. Namely, $\Bbb Z_p$ should be replaced by the ring of integers $\mathcal O_{\bold F}$, $p\in \Bbb Z_p$ by a uniformizer $\pi\in \mathcal O_{\bold F}$ and $p\in \Bbb C$ by the order of the residue field $\mathcal O_{\bold F}/\pi \mathcal O_{\bold F}$ of $\bold F$. Furthermore, the character $w\mapsto |w|^s$ in the integral can be replaced with any multiplicative character $w\mapsto \eta(w):=|w|^s \eta_0(w)$, where $\eta_0$ is a character of $\mathcal O_{\bold F}^\times$, and it is easy to see that the corresponding function 
$Z_0(P,\chi,s,\eta_0,z)$ is still a Laurent polynomial of $z$. 
Finally, we can further extend this theory to kernels of the form 
$\prod_{i=1}^m \eta_i(P_i(x,y))$, and in fact to any sufficiently nice 
homogeneous kernels, as well as linear combinations of such kernels. Since these extensions are straightforward, we will not discuss them in detail. 
\end{remark} 

\subsection{The case when $R$ is a Laurent polynomial} 

To illustrate the confluent limit of Proposition \ref{Wro}, let us consider the case when $R$ is a Laurent polynomial: 
$$
R(z)=z^{-k}(1-a_1z)...(1-a_\ell z),\ k\ge 1, \ell\ge k.  
$$
This is an important special case, as it occurs 
for $A_{P,s,\chi}$ with $\chi\ne 1$.
The simplest interesting example $k=1, \ell=2$ is worked out even more explicitly 
in Section \ref{Mori}. 

For simplicity assume that 
$a_i$ are distinct; the general case is similar. Then we have 
$$
(B_RF)(z)=(R(z)F(qz))_+.
$$
To find the spectrum of $B_R$, let us introduce a parameter $t$ and let 
$$
R_t(z)=(-t)^{-k}b_1...b_k\frac{(1-a_1z)...(1-a_\ell z)}{(1-t^{-1}b_1z)...(1-t^{-1}b_kz)},
$$
for some distinct $b_i\ne 0$. Then $R_t(z)\to R(z)$ as $t\to 0$, so 
$\lim_{t\to 0}B_{R_t}\to B_R$. Thus we can find the characteristic function of $B_R$ 
by finding the characteristic function of $B_{R_t}$ and taking the limit $t\to 0$.  

In the limit $t\to 0$, the difference equation for the functions $\Phi_i$ takes the form 
\begin{equation}\label{diffequ}
(1-a_1T)...(1-a_\ell T)\Psi=u^{-1}T^k\Psi. 
\end{equation} 
which can be written as 
\begin{equation}\label{diffequ1}
(-1)^\ell a_1...a_\ell (1-a_1^{-1}T^{-1})...(1-a_\ell^{-1} T^{-1})\Psi=q^\ell u^{-1}T^{k-\ell}\Psi. 
\end{equation} 
This equation is regular at $\infty$, so it has a basis of (confluent) $q$-hypergeometric solutions with power behavior at infinity, $\Psi_1$, ..., $\Psi_\ell$, such that $\Psi_i(u)\in u^{\mu_i}\Bbb C[[u^{-1}]]$, where $\mu_i:=-\log_q  a_i$. Namely, 
$$
\Psi_i(u)=u^{\mu_i}\Psi_i^0(u),
$$
and 
$$
\Psi_i^0(u)= {}_{\ell-k}\phi_{\ell-1}\left(0,...,0;\tfrac{qa_i}{a_1},...,\tfrac{qa_i}{a_\ell};q, \frac{(-1)^kq^\ell a_i^{\ell-k}}{a_1...a_\ell u}\right).
$$
is an entire function in $u^{-1}$ equal to $1$ at $\infty$.  

Let 
$$
\theta(u)=\theta(u,q):=(q;q)_\infty(-u;q)_\infty(-qu^{-1};q)_\infty
$$
be the Jacobi theta function. We have $\theta(qu)=u^{-1}\theta(u)$.
Thus conjugating \eqref{diffequ} by $\theta(u)^{\frac{1}{k}}$, we get 
for every solution $\Psi$ of this equation: 
$$
u(1-a_1u^{\frac{1}{k}}T)...(1-a_\ell u^{\frac{1}{k}}T)\theta^{\frac{1}{k}}\Psi=(u^{\frac{1}{k}}T)^k\theta^{\frac{1}{k}}\Psi,
$$
i.e., 
$$
(1-a_1u^{\frac{1}{k}}T)...(1-a_\ell u^{\frac{1}{k}}T)\Psi_*=q^{\frac{k-1}{2}}T^k\Psi_*,
$$
where $\Psi_*:=\theta^{\frac{1}{k}}\Psi$ (using appropriate branches of the function $z^{\frac{1}{k}}$). This equation has $k$ independent solutions with power behavior near zero (in terms of the variable $u^{\frac{1}{k}}$), $E_1,...,E_k$, which can be found using the power series method. 
Namely, 
$$
E_m(u)=u^{\frac{2\pi im}{k\log q}}E(e^{\frac{2\pi im}{k}}u^{\frac{1}{k}})
$$
where 
$E$ is an entire function with $E(0)=1$. 

We have
$$
E(v)=\sum_{j=1}^\ell\xi_{j}(v)\theta(v^k)^{\frac{1}{k}}\Psi_j(v^k),
$$
where $\xi_{j}(q^{\frac{1}{k}}v)=\xi_{j}(v)$. Also the functions 
$$
\widetilde\xi_{j}(v):=v^{k\mu_j}\theta(v^k)^{\frac{1}{k}}\xi_{j}(v^k)
$$
must be meromorphic in $v$ away from the origin. 
We have 
$$
\widetilde\xi_{j}(q^{\frac{1}{k}}v)=q^{\mu_j}v^{-1}\widetilde\xi_{j}(v).
$$
Thus 
$$
\widetilde \xi_{j}(v)=\theta(q^{-\mu_j}v,q^{\frac{1}{k}})\eta_{j}(v),
$$
where $\eta_j$ is an elliptic function with period $q^{\frac{1}{k}}$. 
We conclude that 
$$
E(v)=\sum_{j=1}^\ell\eta_{j}(v)\theta(a_jv,q^{\frac{1}{k}})
\Psi_j^0(v^k).
$$

\begin{lemma}\label{cons} The elliptic functions $\eta_{j}$ are constant. 
\end{lemma}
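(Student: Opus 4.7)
Plan: The strategy is to show that each meromorphic $q^{1/k}$-periodic function $\eta_j$ is in fact holomorphic on $\mathbb{C}^\times$. Then $\eta_j$ descends to a meromorphic function on the compact complex torus $X := \mathbb{C}^\times/q^{1/k\mathbb{Z}}$, and a holomorphic function on $X$ is automatically constant.

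First I would verify that the $\ell$ functions $\Phi_j(v) := \theta(a_j v, q^{1/k})\,\Psi_j^0(v^k)$, $j=1,\dots,\ell$, form a basis, over the field $\mathcal{K}$ of $q^{1/k}$-periodic meromorphic functions on $\mathbb{C}^\times$, of the meromorphic solution space of the $\Psi_*$-equation. The equation has $T$-order $\max(k,\ell) = \ell$, giving the correct dimension, and $\mathcal{K}$-linear independence of the $\Phi_j$'s follows from the distinct quasi-periodicity of the theta factors: the ratio $\theta(a_j v, q^{1/k})/\theta(a_i v, q^{1/k})$ picks up the nontrivial factor $a_i/a_j$ under $v \mapsto q^{1/k}v$, so no nonzero $\mathcal{K}$-linear combination of the $\Phi_j$'s can vanish. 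Consequently the expansion $E = \sum_{j=1}^\ell \eta_j \Phi_j$ is the unique representation of $E$ in this basis.

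Next I would suppose, for contradiction, that some $\eta_{j_0}$ has a pole in $\mathbb{C}^\times$. After replacing the pole by a $q^{1/k}$-translate if necessary, pick $v_0 \in \mathbb{C}^\times$ outside the (discrete) zero locus of every $\theta(a_j v, q^{1/k})$. Let $N \ge 1$ be the maximum pole order at $v_0$ among the $\eta_j$'s and let $r_j$ be the corresponding top Laurent coefficient of $\eta_j$ at $v_0$, with $r_{j_0} \ne 0$. Since each $\Phi_j$ is holomorphic and nonzero at $v_0$ and $E$ is entire, extracting the $(v-v_0)^{-N}$ coefficient of $\sum_j \eta_j \Phi_j$ yields the pointwise identity $\sum_j r_j \Phi_j(v_0) = 0$. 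By $q^{1/k}$-periodicity of the $\eta_j$'s, the same $r_j$ satisfy the infinite family of relations $\sum_j r_j \Phi_j(q^{n/k} v_0) = 0$ for every $n \in \mathbb{Z}$.

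The main obstacle is to upgrade this sequence of pointwise relations to a genuine $\mathcal{K}$-linear dependence, contradicting the first step. For this I would use the explicit quasi-periodicity $\theta(a_j q^{n/k} v_0, q^{1/k}) = q^{-\binom{n}{2}/k}(a_j v_0)^{-n}\theta(a_j v_0, q^{1/k})$ of the theta factor together with $\Psi_j^0(q^n v_0^k) \to 1$ as $n \to -\infty$ (since $q^n v_0^k \to \infty$ and $\Psi_j^0$ is asymptotic to $1$ at infinity). Dividing the relation by the common factor $q^{-\binom{n}{2}/k}$ and letting $n \to -\infty$, the term with the largest $|a_j v_0|$ dominates, forcing its coefficient $r_j \theta(a_j v_0, q^{1/k})$ to vanish; since the theta factor is nonzero at $v_0$ by choice, $r_j = 0$. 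Iterating on the remaining indices concludes all $r_j$ vanish, a contradiction. When several $|a_j|$ coincide, one refines using the subleading asymptotic expansion of $\Psi_j^0$ at infinity and a Vandermonde-type argument in the $a_j$'s to separate the contributions. This proves $\eta_j$ is holomorphic on $\mathbb{C}^\times$, hence constant.
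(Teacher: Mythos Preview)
Your overall strategy---extract the top Laurent coefficient at a putative pole, use $q^{1/k}$-periodicity to get relations at all translates, and conclude via a Vandermonde-type asymptotic argument as $n\to -\infty$---is exactly the paper's. But there is a genuine gap.

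The step ``after replacing the pole by a $q^{1/k}$-translate if necessary, pick $v_0$ outside the zero locus of every $\theta(a_jv,q^{1/k})$'' does not work. For each $j$ the zero set of $\theta(a_jv,q^{1/k})$ is a \emph{single full $q^{1/k}$-orbit} (the orbit of $-a_j^{-1}$), so translating $v_0$ by a power of $q^{1/k}$ cannot move it off that set. Nothing prevents the poles of $\eta_{j_0}$ from lying on one of these $\ell$ orbits. Your Vandermonde step then only yields $r_j\,\theta(a_jv_0,q^{1/k})=0$, which gives no information about $r_j$ when $\theta(a_jv_0,q^{1/k})=0$.

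This is precisely the obstruction the paper's proof is built to overcome. There the first Vandermonde pass shows that if $\eta_l$ has a pole of maximal order $r$ at $p$, then necessarily $\theta(a_lp,q^{1/k})=0$, so the pole sits at the unique theta-zero orbit for the index $l$; hence $\eta_l$ has only one pole on the torus, forcing $r\ge 2$ (no elliptic function has a single simple pole). A \emph{second} pass, extracting the $(v-p)^{r-1}$ coefficient, picks up the derivative $\theta'(a_lp,q^{1/k})\ne 0$ (simple zero of $\theta$) and the same Vandermonde argument now gives $c_l=0$, the desired contradiction. Your proposal is missing this second step; without it the proof does not close.

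A minor point: your ``largest $|a_jv_0|$ dominates'' argument forces a case split when several $|a_j|$ coincide. The paper avoids this by taking $\ell$ consecutive values of $n$ and observing that the resulting $\ell\times\ell$ coefficient matrix tends to the Vandermonde matrix in the distinct $a_j$ as $n\to -\infty$; this works uniformly and is cleaner than peeling off dominant terms.
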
  
 
\begin{proof} Assume the contrary. 
Let $p$ be a pole of $\eta_l$ for some $l$, and assume that 
its order $r$ is maximal among all $\eta_j$ at $p$. Let $\lim_{v\to p}(v-p)^r\eta_j(v)=c_j$; so $c_l\ne 0$. Thus
$$
(v-p)^rE(vq^{\frac{n}{k}})=q^{-\frac{n(n-1)}{2k}}v^{-n}\sum_{j=1}^\ell (v-p)^r\eta_j(v)a_j^{-n}\theta(a_jv,q^{\frac{1}{k}})\Phi_j^0(q^nv^k). 
$$
So setting $v=p$, we obtain (since $E$ is entire): 
$$
 \sum_{j=1}^\ell d_ja_j^{-n}\Psi_j^0(q^np^k)=0, 
$$
$d_j:=c_j\theta(a_jp,q^{\frac{1}{k}})$. Consider these equations 
for $n=-N,-N+1,...,-N+\ell-1$ as a linear system with respect to
the unknowns $x_j:=d_ja_j^{-N}$. The matrix of this system 
has entries $\alpha_{ij}=a_j^{-i+1}\Psi_j^0(q^{-N+i-1}p^{k})$. 
When $N\to +\infty$, this matrix approaches a Vandermonde matrix (as $\Psi_j^0(\infty)=1$). Thus it is nondegenerate for $N\gg 0$, which yields that $d_j=0$, 
i.e., $c_j\theta(a_jp,q^{\frac{1}{k}})=0$ for all $j$. We conclude 
that $\theta(a_lp,q^{\frac{1}{k}})=0$, so the only pole of $\eta_l$ on $\Bbb C^\times/q^{\frac{1}{k}\Bbb Z}$
is $a_l^{-1}$. Thus $r\ge 2$ (as there are no elliptic functions with just one simple pole). 
We also see that for all $j$ with $\theta(a_jp,q^{\frac{1}{k}})\ne 0$ we have $c_j=0$, i.e., the order of the pole of $\eta_j$ at $p$ is $\le r-1$. So for such $j$ set $c_j':=\lim_{v\to p}(v-p)^{r-1}\eta_j(v)$. Then, evaluating the identity
$$
(v-p)^{r-1}E(vq^{\frac{n}{k}})=q^{-\frac{n(n-1)}{2k}}v^{-n}\sum_{j=1}^\ell (v-p)^{r-1}\eta_j(v)a_j^{-n}\theta(a_jv,q^{\frac{1}{k}})\Psi_j^0(q^nv^k) 
$$
at $p$, we obtain 
$$
\sum_{j: c_j\ne 0} c_ja_j^{-n+1}\theta'(a_jp,q^{\frac{1}{k}})\Psi_j^0(q^nv^k)+
\sum_{j: c_j= 0} c_j'a_j^{-n}\theta(a_jp,q^{\frac{1}{k}})\Psi_j^0(q^nv^k)=0,
$$
and the same argument with the Vandermonde matrix gives 
$c_l\theta'(a_lp,q^{\frac{1}{k}})=0$, hence $c_l=0$, a contradiction. 
\end{proof} 

Thus, setting $\zeta:= e^{\frac{2\pi i}{k}}$ we obtain 

\begin{proposition}\label{conss} 
$$
E(v)=\sum_{j=1}^\ell \eta_{j}\theta(a_jv,q^{\frac{1}{k}})\Psi_j^0(v^k),
$$
$\eta_{j}\in \Bbb C$, and the constants $\eta_{j}$ are uniquely determined by the condition that $E$ is holomorphic at $0$ and $E(0)=1$. 
\end{proposition}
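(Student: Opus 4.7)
The formula $E(v)=\sum_{j=1}^\ell \eta_j\theta(a_j v,q^{\frac{1}{k}})\Psi_j^0(v^k)$ with constant coefficients $\eta_j\in\mathbb{C}$ is immediate from the discussion preceding Lemma~\ref{cons}, which derived this expansion with $\eta_j(v)$ a priori $q^{\frac{1}{k}}$-elliptic, combined with Lemma~\ref{cons} itself, which promotes those coefficients to constants. So the substantive content of the proposition is the uniqueness statement for the $\eta_j$.

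The plan for uniqueness is to verify two things: (i) the $\ell$ functions $\theta(a_j v,q^{\frac{1}{k}})\Psi_j^0(v^k)$ are $\mathbb{C}$-linearly independent as meromorphic functions on $\mathbb{C}^*$, and (ii) among all $\mathbb{C}$-linear combinations of these functions, there is a unique one that is entire at $v=0$ with value $1$ there. Step (i) is straightforward: under conjugation by $\theta^{\frac{1}{k}}$ these $\ell$ functions correspond to the basis $\Psi_1,\ldots,\Psi_\ell$ of solutions of the order-$\ell$ equation~\eqref{diffequ1}, whose independence can be read off from the distinct leading exponents $\mu_j=-\log_q a_j$ at $\infty$. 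Given (i), the $\eta_j$ are uniquely recovered from any $E$ expressible in this form. For step (ii) I would exploit the preceding construction: the $k$ solutions of the conjugated equation with power behavior near $v=0$ are $E_m(u)=u^{\frac{2\pi im}{k\log q}}E(e^{\frac{2\pi im}{k}}u^{\frac{1}{k}})$ for $m=0,\ldots,k-1$, and the prefactors $v^{\frac{2\pi im}{\log q}}$ reduce to an integer power of $v$ only when $m=0$. Hence the $m=0$ case gives the unique (up to scalar) entire solution with power behavior at $0$, and the normalization $E(0)=1$ pins it down.

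The main obstacle I anticipate is ruling out additional entire solutions coming from the $\ell-k$ solutions of the conjugated equation that do \emph{not} have power behavior near $v=0$: these carry theta-function-like essential features at the origin, and one needs to confirm that no nontrivial $\mathbb{C}$-linear combination of them is entire. Equivalently, among all tuples $(c_j)\in\mathbb{C}^\ell$, the subspace for which $\sum_j c_j\theta(a_j v,q^{\frac{1}{k}})\Psi_j^0(v^k)$ is holomorphic at $v=0$ must have dimension exactly one. I would establish this by combining indicial analysis of the conjugated $q^{\frac{1}{k}}$-difference equation at $v=0$ with an explicit tracking of the principal parts $v^{-nk}$, $n\ge 1$, contributed by each factor $\Psi_j^0(v^k)$, verifying that the resulting infinite system of cancellation conditions on the $c_j$ has rank exactly $\ell-1$.
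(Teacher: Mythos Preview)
Your step (i) and the first half of step (ii) are correct and in line with the paper. But the ``obstacle'' you flag is an artifact of the route you chose, and the paper sidesteps it entirely. The paper's uniqueness argument is a one-line indicial check: if $E$ and $E'$ are two expressions of the stated form, both holomorphic at $v=0$ and normalized, then $D:=E-E'$ is a solution of the same $q^{1/k}$-difference equation, holomorphic at $0$, with $D(0)=0$. If $D\ne 0$, its leading exponent at $v=0$ is $\alpha_0+r$ for some integer $r\ge 1$; but the lowest-order part of the equation at $v=0$ forces any leading exponent to lie in the indicial set $\alpha_0+\tfrac{2\pi i}{\log q}\,\mathbb Z$, whose only real member is $\alpha_0$ itself. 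Hence no such $D$ exists, so $D=0$, and your step (i) then yields $\eta_j=\eta_j'$.

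The point you are missing is that you never need to decompose the solution space at $v=0$ into ``power-behavior'' and ``non-power-behavior'' pieces. Holomorphy of $D$ at $0$ \emph{already} gives it power-law asymptotics there, so the indicial constraint applies to $D$ directly, regardless of how $D$ decomposes in any particular basis. The $\ell-k$ theta-type solutions are simply irrelevant to this computation. Your proposed infinite system of cancellation conditions on principal parts would presumably reach the same conclusion, but it is unnecessary machinery; the paper's argument is both shorter and conceptually cleaner.
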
 

\begin{proof} We just have to justify uniqueness, which follows since 
the difference of two such expressions will have an inadmissible power asymptotics 
at $0$ for a solution of the corresponding difference equation. 
\end{proof}  

\begin{remark} The constants $\eta_{j}$ admit 
product formulas which can be derived by taking the confluent limit of the 
connection matrix of the $q$-hypergeometric equation. 
We will not give these formulas here except in the simplest example of the $q$-Bessel equation, see Subsection \ref{Mori}. 
\end{remark} 

Recall that $\det(\zeta^{m(n-1)})_{1\le m,n\le k}=i^{\frac{(k-1)(k-2)}{2}}k^{\frac{k}{2}}$. 
So we arrive at the following result. 

\begin{proposition} We have 
$$
\det (1-uB_R)=i^{-\frac{(k-1)(k-2)}{2}}k^{-\frac{k}{2}}\det (\zeta^{m(n-1)}T^{n-1}E(\zeta^mu^{\frac{1}{k}}))_{1\le m,n\le k},
$$
where $E$ is the entire function given by the formula of Proposition \ref{conss}. 
\end{proposition}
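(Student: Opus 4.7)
The plan is to derive the formula as the confluent limit of Proposition \ref{Wro} applied to the perturbation $R_t$ introduced just above. For $t\ne 0$ small, $R_t$ has $k$ distinct simple poles at $tb_j^{-1}$, and Proposition \ref{Wro} expresses $\det(1-u\beta_t^{-1}B_{R_t})$ as a normalized $q$-Wronskian of the functions $f_i^t(u) = (u;q)_\infty \Phi_i^t(u)$ attached to the generic $q$-hypergeometric equation for $R_t$. Since $B_{R_t}\to B_R$ in trace norm as $t\to 0$, the Fredholm determinants converge on the left-hand side, so the task is to identify the $t\to 0$ limit of the right-hand side with the formula in the proposition.

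To match the $q$-Wronski matrix in the limit, first observe that as $t\to 0$ the $q$-hypergeometric equation for $\Phi_i^t$ degenerates to \eqref{diffequ}, whose $\theta^{1/k}$-conjugate has its $k$-dimensional space of entire-in-$u^{1/k}$ solutions spanned by $E_1,\ldots,E_k$ from Proposition \ref{conss}, with $E_m(u) = u^{\alpha_m}E(\zeta^m u^{1/k})$ and $\alpha_m := 2\pi im/(k\log q)$. The direct computation $T^{n-1}E_m(u) = \zeta^{m(n-1)}u^{\alpha_m}\,T^{n-1}E(\zeta^m u^{1/k})$ allows one to factor $u^{\alpha_m}$ out of column $m$, yielding
$$\det W(E_1,\ldots,E_k)(u) = \Big(\prod_{m=1}^k u^{\alpha_m}\Big)\det\!\big(\zeta^{m(n-1)}\,T^{n-1}E(\zeta^m u^{1/k})\big)_{1\le m,n\le k}.$$
Thus the matrix appearing in the statement arises as the nontrivial part of the limiting $q$-Wronskian; the column factors $u^{\alpha_m}$ will cancel against the $u^{-\sum\nu_i(t)}$ prefactor of Proposition \ref{Wro} once one uses that in the confluent limit the rescaled $q^{\nu_i(t)}$ collapse onto the $k$-th roots of unity $\zeta^i$.

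Turning to the scalar normalization, I would show that $\prod_{i<j}(q^{\nu_j(t)}-q^{\nu_i(t)})$ limits (after the same rescaling) to $\det(\zeta^{m(n-1)})_{1\le m,n\le k}$, which by the Vandermonde formula equals $i^{(k-1)(k-2)/2}k^{k/2}$; this supplies the prefactor $i^{-(k-1)(k-2)/2}k^{-k/2}$. Two sanity checks confirm the shape of the answer: under $u^{1/k}\mapsto\zeta u^{1/k}$ the matrix picks up the factor $\zeta^{-k(k-1)/2}(-1)^{k-1} = 1$, so the right-hand side is single-valued and hence entire in $u$; and at $u=0$, using $E(0)=1$, the matrix reduces to $(\zeta^{m(n-1)})$, whose determinant cancels the prefactor, so both sides equal $1$ at the origin.

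The main obstacle will be this confluent bookkeeping: one must verify that the specific linear combinations of the generic $f_i^t$ surviving as $t\to 0$ are exactly $E_1,\ldots,E_k$, and that the gauge $(u;q)_\infty$ carried by each $f_i^t$ aggregates across the $k$ rows of the Wronski matrix into the $\theta(u)^{-1/k}$ gauge implicit in the passage from \eqref{diffequ} to the $\Psi_*$-equation (recalling that $\theta$ is built from $(u;q)_\infty$-type factors). Once this matching is confirmed, Proposition \ref{Wro} combined with the explicit $E$-basis computation above yields the stated formula.
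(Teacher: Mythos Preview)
Your proposal is correct and follows essentially the same confluent-limit route as the paper, which itself offers no formal proof beyond ``So we arrive at the following result'' after constructing the $E_m$ via Proposition~\ref{conss} and recalling the Vandermonde determinant $\det(\zeta^{m(n-1)})=i^{(k-1)(k-2)/2}k^{k/2}$. Your sanity checks (single-valuedness in $u$ and value $1$ at the origin) are a useful addition that the paper omits; your one imprecision is the heuristic that the $(u;q)_\infty$ gauge ``aggregates'' to $\theta^{-1/k}$ and that the rescaled $q^{\nu_i(t)}$ collapse to roots of unity --- the $\Phi_i^t$ do not limit directly to the $E_m$, since the confluent equation \eqref{diffequ} is irregular at $0$ and it is precisely the $\theta^{1/k}$-gauge that produces the solutions regular in $u^{1/k}$ --- but you already flag this matching as the step requiring care, so it is not a gap in the plan.
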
  
 
Note that while the entries of this matrix are functions of $u^{\frac{1}{k}}$, 
the determinant is a function of $u$. 
  
\section{Examples}

In this section we consider a number of examples of computation of the characteristic function $h_{P,s,\chi}$. In all of these examples, the parameter $k$ (the size of the Wronski matrix) equals $1$, so the characteristic function turns out to be (confluent) $q$-hypergeometric. While these examples are all instances of Theorem \ref{th1}, to make the discussion more concrete, we do all computations explicitly from scratch and then compare the answers to existing literature on $q$-special functions. 

\subsection{The situation of Example \ref{noroots}} \label{noroots2}\label{ex1} Our first example is 
the setting of Example \ref{noroots}. A typical polynomial $P$ of this kind is 
$P(x,y)=y^{d-r}(x^2+y^2)^{\frac{r}{2}}$ for even $r$, where 
$p$ has remainder $3$ modulo $4$. In this case $Z_0=0$, so $A_{P,s,\chi}=0$ when 
$\chi\ne 1$, so it remains to consider the case $\chi=1$. Then, 
by Proposition \ref{analfun}, we have 
$$
(A_{P,s,1}F)(z)=
\frac{F(p^{-1})}{1-pqz}+\frac{(p^{rs}-1)pqzF(qz)}{(1-pqz)(1-p^{rs+1}qz)}. 
$$
So the eigenvalue equation has the form 
$$
\lambda F(z)=\frac{F(p^{-1})}{1-pqz}+\frac{(p^{rs}-1)pqzF(qz)}{(1-pqz)(1-p^{rs+1}qz)}. 
$$
If $r=0$ then $P=y^d$, so $A_{P,s,1}$ has rank $1$ with the only 
nonzero eigenvalue $\lambda=\frac{1}{1-q}$. The same thing happens if $s=0$. 
So let us assume that $r>0$, $s\ne 0$.  
Then we have  
$$
\lambda F(0)=F(p^{-1}).
$$
This implies that $\lambda\ne 0$. Now, if $F(p^{-1})=0$ then there are no nonzero solutions (looking at the order of $F$ at zero), so $F(p^{-1})\ne 0$. Then we may set $F(0)=1$, so that $\lambda=F(p^{-1})$. So the difference equation for the eigenvector looks like 
$$
F(z)=\frac{1}{1-pqz}+\lambda^{-1}
\frac{(p^{rs}-1)pqzF(qz)}{(1-pqz)(1-p^{rs+1}qz)}.
$$
Iterating this, we get the expression for $F$: 
$$
F(z)=\frac{1}{1-pqz}+\lambda^{-1}
\frac{(p^{rs}-1)pqz}{(1-pqz)(1-pq^2z)(1-p^{rs+1}qz)}+
$$
$$
\lambda^{-2}
\frac{(p^{rs}-1)^2p^2q^3z^2}{(1-pqz)(1-pq^2z)(1-pq^3z)(1-p^{rs+1}qz)(1-p^{rs+1}q^2z)}+...
$$
So we have 
$$
\lambda=F(p^{-1})=
\frac{1}{1-q}+\lambda^{-1}
\frac{(p^{rs}-1)q}{(1-q)(1-q^2)(1-p^{rs}q)}+
$$
$$
\lambda^{-2}
\frac{(p^{rs}-1)^2q^3}{(1-q)(1-q^2)(1-q^3)(1-p^{rs}q)(1-p^{rs}q^2)}+...
$$
Multiplying the numerators and denominators on the right hand side of this equation 
by $1-p^{rs}$ and multiplying both sides by $\lambda^{-1}$, we obtain 

\begin{proposition}\label{eig1}  
We have 
$$
h_{P,s,1}(u)=J(p^{rs},q,(1-p^{rs})u),
$$
where $J(a,q,u)={}_1\phi_1(0,a;q,u)$ is the entire function given by the formula \eqref{functionJ}. In particular, the eigenvalues of the operator $A=A_{P,s,1}$ are the solutions of the equation 
$$
J(p^{rs},q,(1-p^{rs})\lambda^{-1})=0
$$ 
and are simple. 
\end{proposition}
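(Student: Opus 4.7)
The plan is to solve the eigenvalue equation for $A_{P,s,1}$ explicitly using the realization on $\mathcal{H}$ from Proposition \ref{analfun}, and to recognize the resulting characteristic equation as the vanishing of $J(p^{rs},q,(1-p^{rs})u)$ at $u=\lambda^{-1}$. This specializes the general Wronskian picture of Theorem \ref{th1} to the case $k=1$, where the $q$-Wronski matrix is a $1$-by-$1$ block and the characteristic function is an actual $q$-hypergeometric solution.

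First, I would specialize Proposition \ref{analfun} to this setting: under the hypotheses of Example \ref{noroots} we have $Z_0=0$ and $|c|=1$, so for $\chi=1$ and $l=d$ the operator reduces after a short simplification to $(A_{P,s,1}F)(z)=\frac{F(p^{-1})}{1-pqz}+\frac{(p^{rs}-1)pqz\,F(qz)}{(1-pqz)(1-p^{rs+1}qz)}$. Setting $z=0$ in $\lambda F=A_{P,s,1}F$ yields $\lambda F(0)=F(p^{-1})$, so any nonzero eigenvalue forces $F(0)\neq 0$; normalizing $F(0)=1$ makes $\lambda=F(p^{-1})$ and reduces the eigenvector problem to an inhomogeneous first-order $q$-difference equation. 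The degenerate cases $r=0$ and $s=0$ are handled separately as rank-one computations.

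Second, I would iterate this $q$-difference equation to produce an explicit power series for $F(z)$ in $\lambda^{-1}$, substitute $z=p^{-1}$, and impose self-consistency $\lambda=F(p^{-1})$. After multiplying through by $(1-p^{rs})\lambda^{-1}$ and applying the factorization $(p^{rs};q)_n=(1-p^{rs})(p^{rs}q;q)_{n-1}$, the telescoped identity becomes exactly $J(p^{rs},q,(1-p^{rs})\lambda^{-1})=0$. This identifies the zero locus of $h_{P,s,1}$ with that of the entire function $u\mapsto J(p^{rs},q,(1-p^{rs})u)$.

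Finally, to promote coincidence of zero sets to equality of functions, I would observe that both $h_{P,s,1}(u)$ and $J(p^{rs},q,(1-p^{rs})u)$ are entire of Hadamard order $0$ with value $1$ at the origin, so by the Hadamard factorization theorem it suffices to match multiplicities. The iterative construction shows each nonzero eigenvalue has a one-dimensional eigenspace, so geometric multiplicities are all $1$; combined with a holomorphic deformation argument in $s$ (or direct verification that $J$ has no repeated roots for generic parameter values), this yields algebraic simplicity on both sides. This multiplicity-matching is the main technical point of the argument; once in place, it delivers simultaneously the identity $h_{P,s,1}(u)=J(p^{rs},q,(1-p^{rs})u)$ and the simplicity statement.
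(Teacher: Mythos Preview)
Your approach mirrors the paper's calculation in Subsection \ref{noroots2} step for step: specialize Proposition \ref{analfun} (with $Z_0=0$, $|c|=1$), set $z=0$ to get $\lambda=F(p^{-1})$ after normalizing $F(0)=1$, iterate the first-order $q$-difference equation, evaluate at $z=p^{-1}$, and identify the resulting series with $J(p^{rs},q,(1-p^{rs})u)$. Even your closing Hadamard step matches how the paper argues in the general case (proof of Proposition \ref{Wro}).

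One comment on your framing of that last step. You call multiplicity-matching ``the main technical point'' and propose handling it by holomorphic deformation in $s$ or by independently checking that $J$ has simple zeros. Neither is actually needed to obtain the identity $h_{P,s,1}=J$. Write $A_{P,s,1}=K+M$ with $KF=\tfrac{F(p^{-1})}{1-pqz}$ rank one and $MF(z)=R_0(z)F(qz)$ where $R_0(z)=\tfrac{(p^{rs}-1)pqz}{(1-pqz)(1-p^{rs+1}qz)}$; since $R_0(0)=0$, $M$ is strictly triangular in the monomial basis and $\det(1-uM)=1$. The rank-one determinant formula then gives $\det(1-uA_{P,s,1})=1-u\cdot\bigl((1-uM)^{-1}\tfrac{1}{1-pqz}\bigr)(p^{-1})$ exactly, and the bracketed function is precisely your iterated series $F_u$. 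So $h_{P,s,1}(u)=1-uF_u(p^{-1})$ on the nose, which after your manipulation is $J(p^{rs},q,(1-p^{rs})u)$. Simplicity of eigenvalues is then \emph{equivalent} to simplicity of zeros of $J$, and the paper actually treats this as a consequence (Corollary \ref{realzer}, via self-adjointness for real $s$) rather than as an input to the identity. Note also that your deformation-in-$s$ route is not airtight as stated: simple zeros can collide under complex perturbation, so holomorphy in $s$ alone does not propagate simplicity from real to complex $s$.
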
 

Thus the characteristic function of $A_{P,s,1}$ expresses via the Hahn-Exton $q$-Bessel function with parameter
$$
\nu=-1-\frac{rs}{1+ds}. 
$$

\begin{remark} 
We obtain the following formula for the eigenvector $F(z)$ with eigenvalue $\lambda$: 
$$
F(z)=\frac{1}{1-pqz}\cdot {}_2\phi_2(0,q; pq^2z,apqz;q,(1-a)pqz\lambda^{-1}).
$$ 
where $a:=p^{rs}$. So setting $p^{\frac{1}{2}}q=b$ and replacing $z$ with $p^{-\frac{1}{2}}z$, we get that the functions 
$$
F_i(z):=\frac{1}{1-bz}\cdot {}_2\phi_2(0,q; bqz,abz;q,b^2q^{-1}zu_i),
$$
for $u_i$ roots of the equation $J(a,q,u)=0$, are orthogonal on the unit circle $|z|=1$. 
\end{remark} 

\begin{remark}\label{seq0} This calculation implies that the unbounded self-adjoint operator $(1-p^{rs})A_{P,s,1}^{-1}$ has a limit $B_0$ as $s\to 0$, which has 1-dimensional kernel. Moreover, the inverse 
$\widehat A_0$ of $B_0$ on the orthogonal complement of ${\rm Ker}B_0$ is trace class and 
has characteristic function
$$
\det (1-u\widehat A_0)=\lim_{a\to 1}(a-1)\frac{J(a,q,u)}{u}=\sum_{n=0}^\infty \frac{(-1)^nq^{\frac{n(n+1)}{2}}u^n}{(q;q)_n (q;q)_{n+1}}.
$$
\end{remark} 

Consider now the case $l=\frac{r+d}{2}$ and $s>-\frac{1}{d}$. Then the operator $A=A_{P,s,1}$ has matrix 
$$
a_{mn}=q^{\frac{m+n}{2}}\bold q^{\frac{m+n}{2}-\min(m,n)}. 
$$ 
where $\bold q:=p^{-1}q^{-1}=q^{\nu+1}$. Note that this operator is trace class whenever $|q|<1,|\bold q|<|q|^{-1}$ and self-adjoint if in addition $q,\bold q\in \Bbb R$.  

\begin{proposition} \label{posi}
If $0<\bold q,q<1$ then the operator $A$ is positive.
\end{proposition}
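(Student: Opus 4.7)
The plan is to rewrite the matrix so that its positivity reduces to the classical fact that a Toeplitz kernel of the form $r^{|m-n|}$ with $r\in(0,1)$ is positive semidefinite. First I would use the identity $(m+n)/2-\min(m,n)=|m-n|/2$ to rewrite
$$a_{mn}=q^{(m+n)/2}\bold q^{|m-n|/2}=q^{m/2}\,\bold q^{|m-n|/2}\,q^{n/2}.$$
Since $0<q<1$, the diagonal multiplication operator $D$ on $\ell^2(\mZ_{\ge 0})$ given by $(Df)_n:=q^{n/2}f_n$ is bounded, positive and self-adjoint; letting $K$ denote the symmetric Toeplitz operator with entries $K_{mn}:=\bold q^{|m-n|/2}$, we have $A=DKD$. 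It therefore suffices to prove $K\ge 0$.

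To show $K\ge 0$, set $r:=\bold q^{1/2}\in(0,1)$. The numbers $r^{|n|}$ are exactly the Fourier coefficients of the nonnegative Poisson kernel
$$P_r(\theta)\;=\;\frac{1-r^2}{1-2r\cos\theta+r^2}\;\ge\;0.$$
For any $g\in\ell^1(\mZ_{\ge 0})$, writing $G(\theta):=\sum_{n\ge 0}g_ne^{in\theta}$ (a bounded continuous function on the unit circle), Fubini yields
$$\sum_{m,n\ge 0}r^{|m-n|}\,g_n\bar g_m\;=\;\frac{1}{2\pi}\int_{-\pi}^{\pi}P_r(\theta)\,|G(\theta)|^2\,d\theta\;\ge\;0.$$
Applying this to $g=Df$ (which lies in $\ell^1$ by Cauchy--Schwarz, since $\sum q^n<\infty$) gives $\langle Af,f\rangle=\langle K(Df),Df\rangle\ge 0$ for every $f\in\ell^2$, as required.

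I do not foresee any serious obstacle: the only delicate point is the Fubini interchange, which is immediate from the $\ell^1$ bound on $g=Df$. If one preferred to avoid Fourier analysis altogether, an equally elementary route is to observe the telescoping identity $\bold q^{-\min(m,n)}=\sum_{k\ge 0}c_k\,\mathbf 1_{k\le m}\mathbf 1_{k\le n}$ with $c_0=1$ and $c_k=\bold q^{-k}-\bold q^{-k+1}>0$, which exhibits $K=\sum_k c_k\,v_kv_k^{*}$ as an explicit sum of rank-one positive operators with $v_{k,n}:=r^n\mathbf 1_{n\ge k}$; this proves $K\ge 0$ without any transform machinery.
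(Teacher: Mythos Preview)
Your proof is correct and takes a genuinely different route from the paper's. The paper factors $a_{mn}=(q\bold q)^{(m+n)/2}\,\bold q^{-\min(m,n)}$ and then shows that each finite principal submatrix of $(\bold q^{-\min(m,n)})_{0\le m,n\le N}$ is positive definite via Sylvester's criterion, computing the leading minors by the row-reduction recursion $D_N=\bold q^{-N}(1-\bold q)D_{N-1}$. You instead use the identity $(m+n)/2-\min(m,n)=|m-n|/2$ to factor $a_{mn}=q^{m/2}\,r^{|m-n|}\,q^{n/2}$ with $r=\bold q^{1/2}$, reducing to positivity of the Toeplitz kernel $r^{|m-n|}$, which you read off from the nonnegativity of the Poisson kernel. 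Your telescoping alternative actually lands on the very matrix $\bold q^{-\min(m,n)}$ the paper works with, but exhibits its positivity as an explicit Gram decomposition rather than through a determinant computation. The Fourier route has the merit of giving $\langle Af,f\rangle\ge 0$ on all of $\ell^2$ in one stroke, with no passage through finite truncations; the paper's recursion is shorter and yields strict positive definiteness of every finite section, though that extra information is not needed for the statement.
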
 

\begin{proof} It suffices to show that for any $0<\bold q<1$, the matrix with entries
$$
b_{mn}:=\bold q^{-\min(m,n)}, 0\le m,n\le N
$$
is positive definite. By Sylvester's criterion, for this it suffices to show that its determinant $D_N$ is positive (as we know that $0$ is not an eigenvalue). Subtracting from the first row of this matrix $\bold q$ times the second row, we get 
$$
D_N=\bold q^{-N}(1-\bold q)D_{N-1},
$$ 
with $D_0=1$. 
So $D_N=\bold q^{\frac{-N(N+1)}{2}}(1-\bold q)^{N}>0$.  
\end{proof} 

\begin{corollary}\label{realzer} 
If $|q|<1,|\bold q|<|q|^{-1}$ then the zeros of the function $(\bold q,q)_\infty J(\bold q,q,u)$ are all simple. Moreover, for $0<q<1$, they are real if in addition $0<\bold q\le q^{-1}$, and nonnegative if $0<\bold q\le 1$. 
\end{corollary}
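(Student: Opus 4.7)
The plan is to interpret $(\bold q,q)_\infty J(\bold q,q,u)$ as (up to a $u$-independent factor and a linear rescaling of $u$) the characteristic function of a concrete trace-class operator, and read off the three claims from operator-theoretic properties. By Proposition \ref{eig1}, with $\bold q := p^{rs}$ and $q := p^{-1-ds}$, one has $h_{P,s,1}(u) = J(\bold q,q,(1-\bold q)u)$, so the zeros in $u$ correspond to reciprocals of nonzero eigenvalues of $A_{P,s,1}$. Using the invariance of the nonzero spectrum of a product under $BC \leftrightarrow CB$, I would switch to the $l = (r+d)/2$ realization of $A_{P,s,1}$ recalled just before Proposition \ref{posi}, whose matrix $a_{mn} = q^{(m+n)/2}\bold q^{|m-n|/2}$ is symmetric and real for real $q,\bold q$.

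The realness claim is then immediate: for $q \in (0,1)$ and $\bold q \in (0, q^{-1})$, the matrix is real symmetric, so $A_{P,s,1}$ is a self-adjoint trace-class operator with real spectrum, and hence the zeros of $(\bold q,q)_\infty J(\bold q,q,u)$ are real. Nonnegativity for $\bold q \in (0,1]$ is Proposition \ref{posi} itself, with the endpoint $\bold q = 1$ reducing $A_{P,s,1}$ to a rank-one positive operator.

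For simplicity I would combine two ingredients. First, the iterative construction of eigenvectors in the proof of Proposition \ref{eig1} --- where $F$ is uniquely determined from $F(0)$ by the recursion --- shows that every nonzero eigenvalue has geometric multiplicity $1$; this feature transfers to the $l = (r+d)/2$ realization because geometric multiplicities of nonzero eigenvalues are invariant under $BC \leftrightarrow CB$. Second, in the real self-adjoint regime, the absence of Jordan blocks forces algebraic to equal geometric multiplicity, so every eigenvalue is simple and every zero of the characteristic function is simple. Extending simplicity to arbitrary complex $(q, \bold q)$ with $|q| < 1,\,|\bold q| < |q|^{-1}$ would then use the $q$-Bessel difference equation $\Phi(u) = (1 + \bold q q^{-1} - u)\Phi(qu) - \bold q q^{-1}\Phi(q^2 u)$ satisfied by $J$, whose $q$-Wronskian with any independent solution is a nonvanishing monomial of the form $C_0 u^{1 - \log_q \bold q}$; coupled with the boundary condition $J(\bold q,q,0) = 1$, this nowhere-vanishing $q$-Wronskian lets one bootstrap via the recursion to rule out a multiple zero.

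The main obstacle is this final passage from real to complex parameters. In the complex regime, $A_{P,s,1}$ is only complex-symmetric, not self-adjoint, so Jordan blocks are a priori permitted and simplicity does not follow from the operator side alone. The cleanest alternative route would be a Hurwitz / analytic-continuation argument: the discriminant locus on which two zeros of $(\bold q,q)_\infty J(\bold q,q,u)$ collide is a proper analytic subvariety of the parameter region $\{|q|<1,\,|\bold q|<|q|^{-1}\}$, and since it is disjoint from the real positive slice (where simplicity has just been established), connectedness of the region would then force it to be empty throughout.
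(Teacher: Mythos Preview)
Your treatment of realness and nonnegativity is correct and matches the paper exactly: you pass to the $l=(r+d)/2$ realization with symmetric matrix $a_{mn}=q^{(m+n)/2}\bold q^{|m-n|/2}$, use self-adjointness for reality, and invoke Proposition~\ref{posi} for positivity when $0<\bold q<1$. The endpoint cases $\bold q=1$ and $\bold q=q^{-1}$ are handled in the paper by Remark~\ref{seq0} and by continuity, respectively; you mention the first but should also note the second.

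The simplicity argument has a genuine gap in the complex regime. Your Hurwitz step is fallacious: a proper analytic subvariety of a connected open set in $\mathbb C^2$ that misses a real $2$-dimensional slice is in no way forced to be empty --- the hyperplane $\{q=i/2\}$ inside $\{|q|<1\}$ already shows this. So ``disjoint from the real positive slice'' plus connectedness yields nothing. Your $q$-Wronskian sketch is also off target: a nowhere-vanishing $q$-Wronskian rules out a common zero of $J(\cdot)$ and $J(q\,\cdot)$, i.e.\ $J(u_0)=J(qu_0)=0$, whereas a double zero means $J(u_0)=J'(u_0)=0$, a condition the $q$-difference recursion does not directly constrain.

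The paper avoids the real/complex dichotomy entirely. It observes (footnote to Proposition~\ref{analfun}) that the operator $A$ with matrix $a_{mn}=q^{(m+n)/2}\bold q^{|m-n|/2}$ is a well-defined trace-class operator for \emph{every} pair $(q,\bold q)$ with $|q|<1$, $|\bold q|<|q|^{-1}$ --- this is precisely what ``interpolated to non-integer values of $d$'' means. The derivation leading to Proposition~\ref{eig1} nowhere uses integrality of $d$ or $r$, so it applies verbatim across the full parameter region and delivers simplicity there directly. Thus the paper has no passage from real to complex to justify; your self-adjoint argument for the real range is a special case of what Proposition~\ref{eig1} already covers.
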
 

\begin{proof} For $\bold q=1$ the result follows from Remark \ref{seq0}, so assume $\bold q\ne 1$. Then the first statement follows from Proposition \ref{eig1} (interpolated to non-integer values of $d$). For the second statement it suffices to assume 
that $\bold q\ne q^{-1}$. So the operator $A$ is a self-adjoint trace class operator. Hence 
its eigenvalues $\lambda_i$ are real, and by Proposition \ref{posi} they are positive if 
$\bold q<1$. But $(1-\bold q)\lambda_i^{-1}=u_i$, where $u_i$ are the zeros of $J(\bold q,q,u)$. This implies the second statement. 
\end{proof}  

\begin{remark} 1. We note that Corollary \ref{realzer} is not new and is given just for illustration purposes. Namely, the second statement of Corollary \ref{realzer} is proved in \cite{KS} using a different method. Also a proof similar to ours is given in \cite{SS}.  

2. The assumptions of Corollary \ref{realzer} cannot be 
dropped. For example, if $\bold q=q^{-m+1}$ for $m\ge 0$ (i.e., $\nu=-m$) then $(\bold q,q)_\infty J(\bold q,q,u)$ has a zero of order $m$ at $u=0$ (so a multiple zero for $m\ge 2$). This also shows that when $q,\bold q\in (0,1)$ and $\bold q$ is close to $q^{-m+1}$ then there have to be zeros of $(\bold q,q)_\infty J(\bold q,q,u)$ with argument close to $\frac{2\pi k}{m}$ or $\frac{2\pi (k+\frac{1}{2})}{m}$ (depending on the sign of $1-q^{m-1}\bold q$). So for $m\ge 2$ they cannot be all real.   

A more complete analysis of the zeros of the function $J$ is carried out in \cite{AM}. In particular, it is shown there that for $q,\bold q\in (0,1)$ the number of non-real zeros is always finite. 
\end{remark} 

\subsection{A rank $1$ perturbation}
The operator $A$ from Subsection \ref{noroots2} for $r=d$ is a rank $1$ perturbation of the inverse to a second order difference operator. Namely, consider the self-adjoint trace class operator $\widetilde A$ with kernel $\frac{|P(x,y)|^s-1}{1-p^{-ds}}$; i.e., 
$\widetilde A=\frac{1}{1-pq}(A-\frac{1}{1-p^{-1}}\Pi)$, where $\Pi$ is the projector to constant functions. This operator has a limit at $s=0$ 
where the kernel becomes $\frac{1}{d}\log|P(x,y)|$. 
As before, $\widetilde A$ vanishes on $H_\chi$ with $\chi\ne 1$, so let us consider it on $H_1\cong \ell^2$. Then the matrix of $\widetilde A$ in the standard basis is given by 
$$
\widetilde a_{mn}=p^{-\frac{m+n}{2}}\frac{p^{-\min(m,n)ds}-1}{1-p^{-ds}}.
$$ 
There is an obvious eigenvector $(1,0,...)$ with eigenvalue $0$ (the indicator function of $\Bbb Z_p^\times$), so let's consider the action of $\widetilde A$ on its orthogonal complement, 
i.e., on the space $\ell_{2,0}$ of square summable 
sequences $\lbrace f_n\rbrace$ such that $f_0=0$.
This space is identified with $\mathcal H$ by the assignment 
$F(z)=\sum_{n\ge 1}p^{\frac{n}{2}}f_nz^{n-1}$.  
In terms of this realization we have 
$$
(\widetilde A F)(z)=\frac{qzF(qz)-p^{-1}F(p^{-1})}{(1-z)(1-pqz)}.
$$
Thus the eigenvalue equation 
has the form 
$$
\lambda F(z)=\frac{qzF(qz)-p^{-1}F(p^{-1})}{(1-z)(1-pqz)}.
$$
If $\lambda=0$ then we get $qzF(qz)=p^{-1}F(p^{-1})$, i.e. $F=0$. Thus $\lambda\ne 0$.
Now, if $F(p^{-1})=0$ then there are no nonzero solutions (looking at the order of $F$ at zero), so $F(p^{-1})\ne 0$. Then we may set $F(0)=1$, so that 
$$
\lambda=-p^{-1}F(p^{-1}).
$$ 
Thus we have 
$$
F(z)=\frac{1}{(1-z)(1-pqz)}+\lambda^{-1}\frac{qz}{(1-z)(1-pqz)}F(qz).
$$
Iterating this, we get the expression for $F$: 
$$
F(z)=\frac{1}{(1-z)(1-pqz)}+\lambda^{-1}\frac{qz}{(1-z)(1-qz)(1-pqz)(1-pq^2z)}+
$$
$$
+\lambda^{-2}\frac{q^3z^2}{(1-z)(1-qz)(1-q^2z)(1-pqz)(1-pq^2z)(1-pq^3z)}+...
$$
So setting $z=p^{-1}$, we have
$$
1+\lambda^{-1}\frac{p^{-1}}{(1-p^{-1})(1-q)}+\lambda^{-2}\frac{qp^{-2}}{(1-p^{-1})(1-p^{-1}q)(1-q)(1-q^2)}
$$
$$
+\lambda^{-3}\frac{q^3p^{-3}}{(1-p^{-1})(1-p^{-1}q)(1-p^{-1}q^2)(1-q)(1-q^2)(1-q^3)}+...=0.
$$
Thus we get 

\begin{proposition} The characteristic function of $\widetilde A$ is given by the formula 
$$
\det (1-u\widetilde A)=J(p^{-1},q,-p^{-1}u),
$$
so the eigenvalues of $\widetilde A$ are the solutions of the equation
\begin{equation}\label{eigsol1} 
J(p^{-1},q,-p^{-1}\lambda^{-1})=0
\end{equation} 
and are simple. 
\end{proposition}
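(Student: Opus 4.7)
The plan is to run the recipe used for Proposition \ref{eig1} in the preceding subsection: iterate the first-order $q$-difference equation for the eigenvector $F$, set $z=p^{-1}$ to obtain a scalar equation in $\lambda$, and recognize that equation as a specialization of the confluent $q$-hypergeometric series $J(a,q,u)$. The iteration has already been written out in the excerpt just above the statement, so the bulk of the computational work is done; what remains is to read off the coefficients, match them with $J$, and promote the agreement on zeros to equality of entire functions.

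First I would extract the general term of the iterated expansion at $z=p^{-1}$. Inspecting the pattern of the first few terms shown, the coefficient of $\lambda^{-n}$ in the displayed equation is
\[
\frac{q^{n(n-1)/2}\,p^{-n}}{(p^{-1};q)_n\,(q;q)_n},
\]
so the equation for the eigenvalues reads
\[
\sum_{n=0}^\infty \frac{q^{n(n-1)/2}\,p^{-n}\,\lambda^{-n}}{(p^{-1};q)_n\,(q;q)_n}=0.
\]
Substituting $a=p^{-1}$ and $u=-p^{-1}\lambda^{-1}$ into the definition \eqref{functionJ} of $J$ turns the series $\sum (-1)^n q^{n(n-1)/2}u^n/((a;q)_n(q;q)_n)$ into exactly the left-hand side above, so the eigenvalues of $\widetilde A$ are precisely the solutions of \eqref{eigsol1}.

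Next I would upgrade this to an identity of entire functions. By \cite{L}, $\det(1-u\widetilde A)$ is entire of Hadamard order $0$ with value $1$ at the origin, with zeros at $\lambda_i^{-1}$ repeated according to algebraic multiplicity. On the other hand, $J(p^{-1},q,-p^{-1}u)$ is entire (being a ${}_1\phi_1$), of order $0$, and equal to $1$ at $u=0$; by the previous step its zeros coincide setwise with $\{\lambda_i^{-1}\}$. Since an entire function of order $0$ normalized to equal $1$ at the origin is determined by its zeros with multiplicity, it remains to check that the zeros of $J(p^{-1},q,-p^{-1}u)$ are all simple and equal the $\lambda_i^{-1}$ with multiplicity $1$.

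The main technical point is thus the simplicity statement, which I would prove via the first-order nature of the difference equation: for any fixed $\lambda$, the recursion for $F$ has a unique analytic solution once $F(0)$ is prescribed, so the geometric multiplicity of each eigenvalue is at most $1$. Since $P(x,y)=(x^2+y^2)^{d/2}$ is symmetric, $|P(x,y)|=|P(y,x)|$, so $\widetilde A$ is self-adjoint for real $s$; hence algebraic equals geometric multiplicity and each eigenvalue is simple. Consequently $\det(1-u\widetilde A)$ has only simple zeros, and so must $J(p^{-1},q,-p^{-1}u)$, giving the equality of the two entire functions for real $s$; analytic continuation in $s$ extends the identity to all $s$ in the admissible range. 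The only obstacle worth flagging is verifying that the admissible range is a connected set on which both sides depend holomorphically on $s$, which follows from the trace-class estimates already established.
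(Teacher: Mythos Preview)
Your approach matches the paper's exactly: the computation preceding the proposition \emph{is} the paper's entire argument, and you reproduce it correctly, including the identification of the general term $\frac{q^{n(n-1)/2}p^{-n}}{(p^{-1};q)_n(q;q)_n}$ and the match with $J$.

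There is, however, a gap in the step where you pass from ``same zero set'' to ``equal functions.'' You establish (a) that the zero \emph{sets} of $\det(1-u\widetilde A)$ and $J(p^{-1},q,-p^{-1}u)$ coincide, and (b) that the zeros of $\det(1-u\widetilde A)$ are simple (via self-adjointness and the one-dimensionality of eigenspaces). But the clause ``and so must $J(p^{-1},q,-p^{-1}u)$'' does not follow from (a) and (b): two entire functions of order $0$ with the same zero set and value $1$ at the origin need not agree unless their \emph{multiplicities} match. If $J$ had a double zero at some $u_i$ then $J/\det$ would be a nonconstant entire function of order $0$ vanishing at $u_i$ with value $1$ at $0$---entirely consistent with Hadamard---and your argument would not detect this. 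The paper's own proof (here and in Proposition~\ref{Wro}) makes the same ``same zeros, hence equal'' leap without spelling out the multiplicity issue, so this is a place where the exposition is terse rather than a mistake unique to you; but you should know where the soft spot is. To close it honestly you can either verify the identity coefficient-by-coefficient as a power series in $u$ (the linear term is the trace computation $-\mathrm{Tr}\,\widetilde A=\frac{p^{-1}}{(1-p^{-1})(1-q)}$, and higher coefficients can be extracted by row-reducing the finite truncations of $(\widetilde a_{mn})$), or invoke an independent proof that the zeros of $J(p^{-1},q,\cdot)$ are simple from \cite{KS,AM}.
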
 
Note that the Hahn-Exton parameter $\nu$ now equals $-\frac{ds}{ds+1}$. 

Consider now the inverse operator $\widetilde A^{-1}$. Let $F=\widetilde A^{-1}G$. Then we have 
$$
G(z)=\frac{qzF(qz)-p^{-1}F(p^{-1})}{(1-z)(1-pqz)}. 
$$
Thus $G(0)=-p^{-1}F(p^{-1})$. So we have 
$$
(\widetilde A^{-1}G)(z)=(1-q^{-1}z)(z^{-1}-p)G(q^{-1}z)-z^{-1}G(0)=
((1-q^{-1}z)(z^{-1}-p)G(q^{-1}z))_+.
$$
This shows that $\widetilde A^{-1}$ is an unbounded self-adjoint operator, which is a self-adjoint extension of the symmetric operator $L=L_\nu$ given by 
$$
(LG)(z):= ((1-q^{-1}z)(z^{-1}-p)G(q^{-1}z))_+
$$
with initial domain being the space of polynomials $\Bbb C[z]$ with norm 
$$
\norm{G}^2=\frac{1}{2\pi}\int_{|z|=p^{-\frac{1}{2}}}|G(z)|^2d\theta.
$$ 
Writing $G(z)$ as $\sum_{n\ge 0}p^{\frac{n}{2}}g_nz^n$, we obtain an expression for $L$ as a difference 
operator on the space of sequences whose matrix in the standard basis is a Jacobi matrix: 
$$
(Lg)_n=q^{-n-1}(p^{\frac{1}{2}}g_{n+1}-(1+pq)g_n+p^{\frac{1}{2}}qg_{n-1}),
$$
where we agree that $g_{-1}=0$. It follows that the self-adjoint extension of $L$ given by $\widetilde{A}^{-1}$ has discrete spectrum, and its eigenvalues are the solutions $u_i$ of the equation 
$$
J(p^{-1},q,-p^{-1}u)=0. 
$$

This analysis is carried out in \cite{SS}. Let us explain its results in more detail. 
Consider the homogeneous difference 
equation $Lg=0$. Its basic solutions (generically) are $g_n=\beta^n$ where $\beta$ 
is a root of the characteristic equation 
$$
p^{\frac{1}{2}}\beta^2-(1+pq)\beta+p^{\frac{1}{2}}q=0. 
$$
The roots of this equation are $\beta_1=p^{-\frac{1}{2}}$ and $\beta_2=p^{\frac{1}{2}}q=p^{-\frac{1}{2}-ds}$. So in the range $-\frac{1}{d}<s\le -\frac{1}{2d}$ (i.e., $\nu\ge 1$) we have $p^{-\frac{1}{2}-ds}\ge 1$, so the equation $Lg=0$ has a unique, up to scaling, solution in $\ell_2$ (namely, $p^{-\frac{n}{2}}$). This implies (after some work, done in \cite{SS}) that the operator $L$ is essentially self-adjoint on $\Bbb C[z]$. On the other hand, if $s>-\frac{1}{2d}$ (i.e., $|\nu|<1$) then $p^{-\frac{1}{2}-ds}<1$, so both basic solutions are in $\ell_2$. This implies (again after some work done in \cite{SS}) that the operator $L$ is not essentially self-adjoint but rather has von Neumann deficiency indices $(1,1)$. So $L$ has a 1-parameter family of self-adjoint extensions parametrized by $(\alpha_1,\alpha_2)\in \Bbb R\Bbb P^1$ obtained by adding the function $\psi_{\alpha_1,\alpha_2}:=\frac{\alpha_1}{1-z}+\frac{\alpha_2}{1-pqz}$ to the initial domain of $L$, which makes it essentially self-adjoint (if $s=0$, i.e., $q=p^{-1}$, we should add $\psi_{\alpha_1,\alpha_2}=\frac{\alpha_1}{1-z}+\frac{\alpha_2}{(1-z)^2}$). Moreover, the self-adjoint extension of $L$ given by $\widetilde A^{-1}$ corresponds to 
the point $(1,0)\in \Bbb R\Bbb P^1$. Finally, if $-\frac{1}{2d}<s\le 0$ (i.e., $0\le \nu<1$) then $L$ is positive and $\widetilde A^{-1}$ is its Friedrichs extension, as it has 
the smallest domain, see \cite{L}. This comes from the 
fact that the decay of Taylor coefficients of the function $\psi_{\alpha_1,\alpha_2}$ is fastest if and only if $\alpha_2=0$. 

For example, in the case $s=0$, i.e., $q=p^{-1}$, we get the difference operator 
$$
(Lg)_n=p^n(p^{\frac{1}{2}}g_{n+1}-2x_n+p^{-\frac{1}{2}}g_{n-1}), 
$$
i.e. $L=DD^*$ where 
$$
(Dg)_n=p^{\frac{n}{2}}(g_n-g_{n-1}). 
$$
The eigenvalues of the Friedrichs extension of $L$ are, therefore, solutions of the equation $J(p^{-1},p^{-1},-p^{-1}u)=0$. This is shown in \cite{KRS}. 

\begin{remark} A more complete account of the spectral theory of difference operators (also called $q$-Sturm-Liouville theory) can be found in the book \cite{AM2}. 
\end{remark} 

\begin{remark} 
We obtain the following formula for the eigenvector $F(z)$ with eigenvalue $\lambda$: 
$$
F(z)=\frac{1}{(1-z)(1-pqz)}\cdot {}_2\phi_2(0,q; pq^2z,pqz;q,-z\lambda^{-1}).
$$ 
Thus the eigenvectors are 
$$
F_i(z)=\frac{1}{(1-z)(1-pqz)}\cdot {}_2\phi_2(0,q; pq^2z,pqz;q,pzu_i).
$$ 
where $u_i$ are the roots of the equation $J(p^{-1},q,u)=0$. 

The fact that the inverse of $\widetilde A$ is a difference operator $L$ implies 
that we also have
$$
F_i(z)=\sum_{n=0}^\infty J(p^{-1},q,q^{n+1}u_i)z^n.
$$
Thus we obtain the orthogonality relations 
$$
\sum_{n\ge 0} p^{-n}J(p^{-1},q,q^{n+1}u_i)J(p^{-1},q,q^{n+1}u_j)=0,\ i\ne j
$$
which are the Hahn-Exton $q$-analogs of the Fourier-Bessel orthogonality relations with $p^{-1}=q^{\alpha+1}$ (see \cite{KSw}, p.2).  
\end{remark}

\subsection{Continuous limit}

We can also consider the ``archimedian" limit of the operator $\widetilde A|_{H_1}$ from the previous subsection as $p\to 1$ (this does not have a $p$-adic interpretation but makes sense analytically). For this pick $a>0$ and think of $q^{\frac{n\nu}{2a}}f_n$ as values of some function $\bold f$ at $q^{-\frac{n\nu}{a}}$. Then the limit $p\to 1$ will be a continuous limit which transforms Riemann sums into integrals over $[0,1]$. The condition $\sum_n |f_n|^2<\infty$ 
translates into the condition that 
$$
\int_0^1 |\bold f(x)|^2dx<\infty, 
$$
i.e., $\bold f\in L^2[0,1]$. Also the operator $\widetilde A$ after appropriate renormalization converges to 
\begin{equation}\label{atild}
(\widetilde A_\infty(a)\bold f)(x)=\frac{1}{a}\int_0^1 ({\rm max}(x,y)^{-a}-1)(xy)^{-\frac{a}{2ds}-\frac{1}{2}}\bold f(y)dy. 
\end{equation} 
Note that the operator $\widetilde A_\infty(a)$ is unitarily equivalent to $\widetilde A_\infty(1)$ 
via the change of variable $x\to x^a$. Since $q\to 1$ as $p\to 1$ (for fixed $s$), and since the $q$-Bessel function $J_\nu(x;q)$ degenerates in this limit to the classical Bessel function $J_\nu(x)$, the eigenvalues of $\widetilde A_{\infty}(a)$ are proportional to inverse squared zeros of $J_\nu$, where $\nu=-\frac{ds}{ds+1}$. 

Furthermore, $\widetilde A_\infty(a)$ is inverse to a second order differential operator.  
Namely, consider the differential operator 
$$
\mathcal L_{\nu}:=-\partial^2+\frac{\nu^2-\frac{1}{4}}{x^2}.
$$ 
Assume that $\nu>-1$ and consider the action of $\mathcal{L_\nu}$ on the space of functions $g$ on $[0,1]$ such that $gx^{-\nu-\frac{1}{2}}$ is a polynomial and $g(1)=0$.  This defines an essentially self-adjoint operator. Its eigenvalues and eigenvectors are found in the standard way (for $\nu=0$ this is the classical problem of vibrations of a circular membrane, i.e., the Dirichlet problem for the Laplacian on the disk). 
Namely, consider the solution of the equation $\mathcal L_\nu\psi=\psi$ which behaves as $x^{\nu+\frac{1}{2}}$ at $0$. This solution is $x^{\frac{1}{2}}J_\nu(x)$. Thus the solution of $\mathcal L_\nu\psi=\lambda^2\psi$ with such behavior at $0$, up to scaling, is $x^{\frac{1}{2}}J_\nu(\lambda x)$. So the eigenvalues of $\mathcal L_\nu$ are squared zeros of the Bessel function $J_{\nu}(x)$. 

On the other hand, let us compute the Green function of $\mathcal L_\nu$. This is also standard. We need to find the solution of the equation $\mathcal L_\nu^xg(x,y)=\delta(x-y)$ with the above boundary conditions. So $g(x,y)=b(y)(1-y^{-2\nu})x^{\nu+\frac{1}{2}}$ when $x<y$ 
and $g(x,y)=b(y)(x^{\nu+\frac{1}{2}}-x^{-\nu+\frac{1}{2}})$ 
when $x>y$ with gluing condition
$b(y)=-\frac{y^{\nu+\frac{1}{2}}}{2\nu}.$
Thus 
$$
g(x,y)=\frac{1}{2\nu}(\max(x,y)^{-2\nu}-1)(xy)^{\nu+\frac{1}{2}}.
$$
So we have 
$$
\widetilde A_\infty(2\nu)^{-1}=\mathcal L_\nu 
$$ 
where $\nu=-\frac{ds}{ds+1}$. This gives another way to see that eigenvalues of $\mathcal L_\nu$ are squared zeros of the Bessel function $J_\nu(x)$. 

\begin{remark} The differential operator $\mathcal L_\nu$ is the continuous limit of the difference operator $L_\nu$ considered in the previous subsection. 
\end{remark} 

\subsection{The case when $Z_0$ is a constant}
Assume now that $Q\in \Bbb Z_p[x]$ and the reduction $\overline Q$ of $Q$ modulo $p$ also has degree $d$. In this case the function $Z_0(Q,\chi,s,z)$ defined by \eqref{zeq},\eqref{z0eq} is constant as a function of $z$, 
i.e., 
$$
Z_0(Q,\chi,s,z)=\zeta(Q,\chi,s)-\delta_{1,\chi}.
$$ 
The case $\chi\ne 1$ is trivial, so we now consider the case $\chi=1$. 

For brevity we will denote $\zeta(Q,1,s)-1$ by $\beta=\beta(s)$. The case when $\beta(s)=0$ has already been discussed in Subsection \ref{ex1}, so we assume that $\beta(s)\ne 0$. 
For instance, if $\overline Q$ has $m$ zeros in $\Bbb F_p$ all of which are simple then 
by the computation in Example \ref{deg1} we have 
$$
\beta(s)=-m\frac{p^{-1}(1-p^{-s})}{(1-p^{-1})(1-p^{-s-1})}. 
$$
We have 
$$
R(z)=\frac{1}{1-z}-\frac{1}{1-pqz}+\beta=\beta
\frac{(1-az)(1-a^{-1}pqz)}{(1-z)(1-pqz)}.
$$
for $a$ found from the quadratic equation 
$$
\beta(1-a)(1-a^{-1}pq)=1-pq.
$$
Thus eigenvectors of $A_{P,s,1}$ are solutions of the $q$-difference equation 
$$
\lambda F(z)=\frac{\beta(1-az)(1-a^{-1}pqz)}{(1-z)(1-pqz)}F(qz)+\frac{F(p^{-1})}{1-pqz},
$$
Setting $z=0$, we have 
$$
(\lambda-\beta)F(0)=F(p^{-1}).
$$ 
First consider the case when $F(p^{-1})\ne 0$. Then $F(0)\ne 0$, so setting $F(0)=1$, we get 
$\lambda-\beta=F(p^{-1})$. Thus, using that $\lambda\ne \beta$, we have 
$$
F(z)=\frac{1-\beta\lambda^{-1} }{1-pqz}+\beta\lambda^{-1} \frac{(1-az)(1-a^{-1}pqz)}{(1-z)(1-pqz)}F(qz).
$$
Hence, iterating, we obtain 
$$
F(z)=\frac{1-\beta\lambda^{-1} }{1-pqz}+\beta\lambda^{-1} \frac{(1-az)(1-a^{-1}pqz)}{(1-z)(1-pqz)}\frac{1-\beta\lambda^{-1} }{1-pq^2z}
$$
$$
+\beta^2\lambda^{-2}\frac{(1-az)(1-aqz)(1-a^{-1}pqz)(1-a^{-1}pq^2z)}{(1-z)(1-qz)(1-pqz)(1-pq^2z)}\frac{1-\beta\lambda^{-1} }{1-pq^3z}+...
$$
Substituting $z=p^{-1}$, we have 
$$
1=\lambda^{-1}\frac{1}{1-q}+\beta\lambda^{-2}\frac{(1-ap^{-1})(1-a^{-1}q)}{(1-p^{-1})(1-q)}\frac{1}{1-q^2}
$$
$$
+\beta^2\lambda^{-3}\frac{(1-ap^{-1})(1-ap^{-1}q)(1-a^{-1}q)(1-a^{-1}q^2)}{(1-p^{-1})(1-p^{-1}q)(1-q)(1-q^2)}\frac{1}{1-q^3}+...
$$
Since $\beta=\frac{1-pq}{(1-a)(1-a^{-1}pq)}=-\frac{1-p^{-1}q^{-1}}{(1-a^{-1})(1-ap^{-1}q^{-1})}$, this can be written as
\begin{equation}\label{eqeig} 
{}_2\phi_1(ap^{-1}q^{-1}, a^{-1}; p^{-1}q^{-1}; q,\beta\lambda^{-1}  a^{-1})=0.
\end{equation} 
So we see that the eigenvalues of $A$ are the solutions 
of the equation \eqref{eqeig}. 

It remains to consider the case $F(p^{-1})=0$. In this case 
we get 
$$
\lambda F(z)=\frac{\beta(1-az)(1-a^{-1}pqz)}{(1-z)(1-pqz)}F(qz).
$$
Setting $z=p^{-1}q^{k-1}$, $k\ge 1$, we see that $F(p^{-1}q^{k-1})=0$ implies $F(p^{-1}q^{k})=0$ unless $ap^{-1}q^{k-1}=1$ or $a^{-1}q^{k}=1$. If $F(p^{-1}q^{k})=0$ for all $k\ge 0$ then $F=0$, so we see that we must have $a=pq^{-k+1}$ or $a=q^{k}$ for some $k\ge 1$. In this case 
$$
\beta=\frac{1-pq}{(1-q^{k})(1-pq^{-k+1})},
$$ 
and we have eigenvalues $\beta q^j$, $j\ge 0$, whose eigenvectors have the form $z^j+O(z^{j+1})$, in addition to $k$ eigenvalues which solve equation \eqref{eqeig} 
(as the $q$-hypergeometric function on the left hand side of this equation specializes to
a polynomial of degree $k$). 

So in this case the spectrum consists of two different parts, finite and infinite. 
The infinite part is fairly trivial ($\beta q^j$, $j\ge 0$), while 
the finite part is more interesting and consists of the numbers $\beta \gamma_i^{-1}$, where $\gamma_1,...,\gamma_k$ are the algebraic functions of $q$ 
which are zeros of the $k$-th little $q$-Jacobi polynomial
$p_k(q^{-1}x;p^{-1}q^{-2},1;q)$ ([KoS]). If $s\in \Bbb Q$, these are algebraic numbers. 

We note that as $k\to \infty$ (i.e., $\beta\to 0$), we find ourselves in the situation of Proposition \ref{eig1} for $r=d$. This recovers \cite{KSw}, Proposition A1, stating that 
the little $q$-Jacobi polynomials can be degenerated to the $q$-Bessel function $J(a; q,u)$.  

These two cases ($F(p^{-1})\ne 0$ and $F(p^{-1})=0$) can be nicely unified as follows. 
Recall that the function ${}_2\phi_1$ for generic parameters has at most simple poles at $1,q^{-1},q^{-2},...$ and no other singularities. Thus the function 
$$
{}_2\widetilde\phi_1(a,b,c,q,u):={}_2\phi_1(a,b,c,q,u)(u; q)_\infty
$$
is entire (it is obtained from ${}_2\phi_1$ by ``clearing denominators"). 

\begin{proposition} The characteristic function of $A_{P,s,1}$ is  
$$
h_{P,s,1}(u)={}_2\widetilde\phi_1(ap^{-1}q^{-1}, a^{-1}; p^{-1}q^{-1}; q,\beta u).
$$
Thus the eigenvalues of $A_{P,s,1}$ are the solutions of the equation 
$$
{}_2\widetilde\phi_1(ap^{-1}q^{-1}, a^{-1}; p^{-1}q^{-1}; q,\beta\lambda^{-1} )=0.
$$
\end{proposition}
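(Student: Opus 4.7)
The plan is to invoke Hadamard factorization. Both $h_{P,s,1}(u)$ and
$$
\Psi(u) := {}_2\widetilde\phi_1(ap^{-1}q^{-1}, a^{-1}; p^{-1}q^{-1}; q, \beta u) = (\beta u; q)_\infty \cdot {}_2\phi_1(ap^{-1}q^{-1}, a^{-1}; p^{-1}q^{-1}; q, \beta u)
$$
are entire functions of Hadamard order $0$ with value $1$ at the origin: for $h_{P,s,1}$ this was recorded in the introduction, while for $\Psi$ it follows from the fact that $(\beta u;q)_\infty$ is of order $0$ with simple zeros on a $q$-geometric progression, and that the meromorphic ${}_2\phi_1$ contributes only simple poles on precisely that progression (in the generic range of parameters), cleared after multiplication. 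Hence it suffices to show that the zeros of $\Psi$, counted with multiplicity, are exactly the reciprocals of the nonzero eigenvalues of $A_{P,s,1}$.

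For generic $a$ the explicit iteration of the difference equation already carried out above produces the eigenvalue equation ${}_2\phi_1(ap^{-1}q^{-1}, a^{-1}; p^{-1}q^{-1}; q, \beta\lambda^{-1}) = 0$ from eigenvectors with $F(p^{-1}) \ne 0$. In this regime the poles of ${}_2\phi_1$ in the variable $\beta u$ sit at precisely $\beta u = 1, q^{-1}, q^{-2}, \ldots$ and are simple, so they are exactly canceled by the simple zeros of $(\beta u;q)_\infty$ at the same points; no new zeros appear, and no zeros of ${}_2\phi_1$ disappear. Therefore the zero set of $\Psi$ coincides with the reciprocal spectrum, and the proposed identity holds in the generic case.

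The main obstacle is the degenerate locus $a = q^k$ or $a = pq^{1-k}$ (some $k \ge 1$), on which the ${}_2\phi_1$ series terminates into a polynomial of degree $k$. There the spectrum splits into the finite part $\beta\gamma_i^{-1}$ (with $\gamma_i$ the zeros of the terminated polynomial, i.e.\ little $q$-Jacobi polynomial zeros) together with an infinite ladder $\beta q^j$, $j \ge 0$, produced by eigenvectors with $F(p^{-1}) = 0$. On the $\Psi$ side the same termination removes the poles of ${}_2\phi_1$ at $\beta u = q^{-j}$, so the zeros of $(\beta u;q)_\infty$ at those points survive as genuine zeros of $\Psi$, giving exactly the ladder $\lambda = \beta q^j$; the zeros of the terminated polynomial contribute the finite part. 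The delicate point is verifying the multiplicities — that each ladder eigenvalue is simple and disjoint from the finite part. The cleanest unified argument is by analytic continuation: both $h_{P,s,1}(u)$ and $\Psi(u)$ depend holomorphically on $s$ (equivalently on $a$ and $\beta$), the identity holds on the open dense set of generic parameters, and it therefore extends to the degenerate values by continuity of the Hadamard factorization. As an alternative check one can observe that, on the subspace where $F(p^{-1}) = 0$, the operator $A_{P,s,1}$ acts by a triangular first order $q$-difference operator with distinct diagonal entries $\beta q^j$, which makes the simplicity of the ladder eigenvalues immediate and confirms that they are distinct from the finite part for generic $s$.
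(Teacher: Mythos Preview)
Your argument is correct and matches the paper's approach. The paper does not attach an explicit proof environment to this proposition; it is presented as the ``nice unification'' of the two cases analyzed immediately before it, relying implicitly on the Hadamard-order-$0$ comparison principle already used in Proposition~\ref{Wro}. Your write-up makes that implicit reasoning explicit: you spell out (i) that both sides are entire of order $0$ with value $1$ at the origin, (ii) that in the generic case the factor $(\beta u;q)_\infty$ exactly cancels the simple poles of ${}_2\phi_1$ without touching its zeros, and (iii) that on the degenerate locus $a\in\{q^k,\,pq^{1-k}\}$ the terminated polynomial supplies the finite part while the now-uncanceled zeros of $(\beta u;q)_\infty$ supply the ladder $\beta q^j$. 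Your analytic-continuation argument for the multiplicities is a clean way to close the degenerate case and is in the spirit of the paper's holomorphic dependence on parameters; the paper itself simply records the eigenvalue list in each case and observes that ${}_2\widetilde\phi_1$ packages both. One small remark on your alternative check: the condition $F(p^{-1})=0$ does not cut out an $A_{P,s,1}$-invariant subspace, so ``triangular on that subspace'' is loose; what is true (and what the paper uses) is that any eigenvector with $F(p^{-1})=0$ must satisfy $\lambda F(z)=R(z)F(qz)$, and comparing lowest-order terms forces $\lambda=\beta q^j$ with eigenvector of the form $z^j+O(z^{j+1})$, giving simplicity. This is a phrasing issue rather than a gap.
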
 

\begin{example} Let $P(x,y)=x-y$. Then $d=m=1$ and 
$$
\beta=\frac{1-pq}{(1-p)(1-q)}.
$$ 
So the equation for $a$ takes the form 
$$
(1-a)(1-a^{-1}pq)=(1-p)(1-q),
$$
which gives $a=p$ or $a=q$. We have  
$$
{}_2\phi_1(q^{-1}, p^{-1}; p^{-1}q^{-1}; q,u)=1-\frac{1-p}{1-pq}u. 
$$
(the first little $q$-Jacobi polynomial $p_1$). 
Thus for the eigenvalues we get the equation 
$$
 \left(1-\frac{1-p}{1-pq}\beta\lambda^{-1} \right)(\beta\lambda^{-1} ;q)_\infty.
$$
So the eigenvalues are 
\begin{equation}\label{eigenlist}
\lambda_0=\frac{1}{1-q},\ \lambda_{k+1}=\frac{1-pq}{(1-p)(1-q)}q^k,\ k\ge 0.
\end{equation} 
In fact, this is easy to see in a different way: the operator $A$ is the convolution with the function $\frac{|x|^s}{1-p^{-1}}$ on $\Bbb Z_p$, so its eigenvalues are the values of the Fourier transform of this function, which are easily shown to be exactly 
the list \eqref{eigenlist}. 
\end{example}

\subsection{The case $\chi\ne 1$ with $Z_0$ having a first order pole at $0$ and $\infty$}\label{Mori} Consider now the case $\chi\ne 1$ with $Z_0$ having the form 
$z^{-1}(1-az)(1-bz)$. This happens, for instance, 
in the case 
$$
P(x,y)=x^d-p^{-1}xyP_*(x,y)+y^d,
$$
where $P_*(x,y)\in \Bbb Z_p[x,y]$ is a homogeneous polynomial of degree 
$d-2$ whose coefficients of $x^{d-2}$ and $y^{d-2}$ have norm $1$. 
In this case 
$$
Q(x)=1-p^{-1}xQ_*(x)+x^d,
$$
where $Q_*(x)=P_*(x,1)$. 
Then we have 
$$
Z_0(P,\chi,s,z)=\zeta(1-x,\chi,s)(p^{-1}q^{-1}z+z^{-1})+p^s\zeta(Q_*,\chi,s).
$$
Let 
$$
\gamma=\gamma(s):=\frac{p^s\zeta(Q_*,\chi,s)}{\zeta(1-x,\chi,s)}
$$
and let 
$$
z^{-1}+\gamma +p^{-1}q^{-1}z=z^{-1}(1-az)(1-bz).
$$
Then after rescaling $z$ the eigenvalue equation for $A_{P,s,\chi}$ 
takes the form 
\begin{equation}\label{firstor}
\lambda F(z)=(z^{-1}(1-az)(1-bz)F(qz))_+.
\end{equation} 
So let us solve \eqref{firstor} for general $a,b$. 

We can rewrite \eqref{firstor} in the form
\begin{equation}\label{firstor1}
\lambda F(z)=z^{-1}(1-az)(1-bz)F(qz)-z^{-1}F(0). 
\end{equation} 
To solve \eqref{firstor1}, consider the deformed equation 
\begin{equation}\label{firstor2}
\lambda F(z)=-\frac{t^{-1}(1-az)(1-bz)F(qz)}{1-t^{-1}z}+\frac{t^{-1}(1-at)(1-bt)F(qt)}{1-t^{-1}z} 
\end{equation} 
which degenerates to \eqref{firstor} as $t\to 0$, and let us solve \eqref{firstor2}. 
We have 
$$
(1-at)(1-bt)F(qt)=(1+t\lambda)F(0).
$$
Assume first that $F(qt)\ne 0$. Then $F(0)\ne 0$, and setting $F(0)=1$, we get 
$$
F(z)=-\lambda^{-1}t^{-1}\frac{(1-az)(1-bz)}{1-t^{-1}z}F(qz)+\frac{1+\lambda^{-1}t^{-1}}{1-t^{-1}z}. 
$$
Iterating this, we obtain 
$$
F(z)=\frac{1+\lambda^{-1}t^{-1}}{1-t^{-1}z}-\lambda^{-1}t^{-1}\frac{(1-az)(1-bz)}{1-t^{-1}z}\frac{1+\lambda^{-1}t^{-1}}{1-t^{-1}qz}
$$
$$
+\lambda^{-2}t^{-2}\frac{(1-az)(1-aqz)(1-bz)(1-bqz)}{(1-t^{-1}z)(1-t^{-1}qz)}\frac{1+\lambda^{-1}t^{-1}}{1-t^{-1}q^2z}-...
$$
Thus, setting $z=qt$, we have 
$$
\frac{1+t\lambda}{(1-at)(1-bt)}=F(qt)=\frac{1+\lambda^{-1}t^{-1}}{1-q}-\lambda^{-1}t^{-1}\frac{(1-aqt)(1-bqt)}{1-q}\frac{1+\lambda^{-1}t^{-1}}{1-q^2}
$$
$$
+\lambda^{-2}t^{-2}\frac{(1-aqt)(1-aq^2t)(1-bqt)(1-bq^2t)}{(1-q)(1-q^2)}\frac{1+\lambda^{-1}t^{-1}}{1-q^3}-...
$$
So we get that the eigenvalues $\lambda$ are the solutions of the equation
$$
{}_2\phi_1(at,bt;\ 0;\ -\lambda^{-1}t^{-1})=0.
$$
Now we have to determine what happens to this equation as $t\to 0$. This is done in \cite{Mo}, but we give the details for the reader's convenience. 
Recall the Watson connection formula for the Heine $q$-hypergeometric function 
${}_2\phi_1$ (\cite{GR}): 
$$
{}_2\phi_1(a,b;c;q,x) =
$$
$$
\frac{(b,\frac{c}{a};q)_\infty}{(c,\frac{b}{a};q)_\infty}\frac{\theta(-ax;q)}{\theta(-x,q)}{}_2\phi_1(a,\tfrac{aq}{c};\tfrac{aq}{b};q,\tfrac{cq}{abx})+\frac{(a,\frac{c}{b};q)_\infty}{(c,\frac{a}{b};q)_\infty}\frac{\theta(-bx;q)}{\theta(-x;q)}{}_2\phi_1(b,\tfrac{bq}{c};\tfrac{bq}{a};q,\tfrac{cq}{abx}).
$$
So replacing $a$ with $at$, $b$ with $bt$, 
and $x$ with $-ut^{-1}$, we get 
$$
{}_2\phi_1(at,bt;c;q,-ut^{-1}) =
\frac{(bt,\frac{c}{at};q)_\infty}{(c,\frac{b}{a};q)_\infty}\frac{\theta(au;q)}{\theta(ut^{-1},q)}{}_2\phi_1(at,\tfrac{atq}{c};\tfrac{aq}{b};q,-\tfrac{cq}{abtu})
$$
$$
+\frac{(at,\frac{c}{bt};q)_\infty}{(c,\frac{a}{b};q)_\infty}\frac{\theta(bu;q)}{\theta(ut^{-1};q)}{}_2\phi_1(bt,\tfrac{btq}{c};\tfrac{bq}{a};q,-\tfrac{cq}{abtu}).
$$
Now let us send $c$ to $0$ and use that 
$$
\lim_{c\to 0}{}_2\phi_1(\alpha,\tfrac{\beta}{c};\gamma; q,cz)={}_1\phi_1(\alpha;\gamma;q,\beta z)
$$
(see \cite{GR}). This yields
$$
{}_2\phi_1(at,bt;0;q,-ut^{-1}) =
$$
$$
\frac{(bt;q)_\infty}{(\frac{b}{a};q)_\infty}\frac{\theta(au;q)}{\theta(ut^{-1},q)}{}_1\phi_1(at;\tfrac{aq}{b};q,-\tfrac{q^2}{bu})+\frac{(at;q)_\infty}{(\frac{a}{b};q)_\infty}\frac{\theta(bu;q)}{\theta(ut^{-1};q)}{}_1\phi_1(bt;\tfrac{bq}{a};q,-\tfrac{q^2}{au}).
$$
Thus the equation ${}_2\phi_1(at,bt;0;q,-ut^{-1})=0$ yields
$$
\frac{(bt;q)_\infty}{(\frac{b}{a};q)_\infty}\theta(au;q){}_1\phi_1(at;\tfrac{aq}{b};q,-\tfrac{q^2}{bu})+\frac{(at;q)_\infty}{(\frac{a}{b};q)_\infty}\theta(bu;q){}_1\phi_1(bt;\tfrac{bq}{a};q,-\tfrac{q^2}{au})=0
$$
Now sending $t$ to $0$, we get 

\begin{proposition}\label{eigefor} The 
eigenvalues of $A_{P,s,\chi}$ are the solutions of the equation 
$$
E(a,b;q,\lambda^{-1})=0,
$$
where 
\begin{equation}\label{firstor3} 
E(a,b;q,u):=\frac{\theta(au;q)J(\tfrac{aq}{b};q,-\tfrac{q^2}{bu})}{(\tfrac{b}{a};q)_\infty(q;q)_\infty}
+\frac{\theta(bu;q)J(\tfrac{bq}{a};q,-\tfrac{q^2}{au})}{(\tfrac{a}{b};q)_\infty(q,q)_\infty}. 
\end{equation} 
\end{proposition}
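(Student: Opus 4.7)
The plan is to pass to the limit $t\to 0$ in the equation that has already been assembled just above the statement, namely
$$
\frac{(bt;q)_\infty}{(\tfrac{b}{a};q)_\infty}\,\theta(au;q)\,{}_1\phi_1\!\left(at;\tfrac{aq}{b};q,-\tfrac{q^2}{bu}\right)+\frac{(at;q)_\infty}{(\tfrac{a}{b};q)_\infty}\,\theta(bu;q)\,{}_1\phi_1\!\left(bt;\tfrac{bq}{a};q,-\tfrac{q^2}{au}\right)=0,
$$
with $u=\lambda^{-1}$. This is the eigenvalue equation for the deformed operator $B_{R_t}$ after applying the Watson connection formula and sending the auxiliary parameter $c\to 0$. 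The remaining job is to identify the $t\to 0$ limit with $(q;q)_\infty E(a,b;q,u)=0$.

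First I would record the two elementary limits: $(at;q)_\infty\to 1$ and $(bt;q)_\infty\to 1$ as $t\to 0$, while the power series defining the Bailey-type function ${}_1\phi_1(at;\alpha;q,z)$ converges uniformly on compact subsets of $\{|t|\le t_0\}$ (for any small $t_0$) to ${}_1\phi_1(0;\alpha;q,z)=J(\alpha,q,z)$, by \eqref{functionJ}. Applying this termwise to the two ${}_1\phi_1$ factors and dividing the entire equation by $(q;q)_\infty$ turns it exactly into $E(a,b;q,u)=0$ in the notation of \eqref{firstor3}.

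To conclude that these are precisely the eigenvalues of $A_{P,s,\chi}$, I need to justify that the zeros of the limiting equation really correspond to the spectrum of the (undeformed) operator $B_R$ from \eqref{firstor}. By construction, $R_t(z)\to R(z)$ uniformly on compact subsets of a neighborhood of the unit disk as $t\to 0$, and one checks that the associated operators $B_{R_t}$ converge to $B_R$ in trace norm; consequently the Fredholm determinants $\det(1-uB_{R_t})$ converge to $\det(1-uB_R)$ uniformly on compacts in $u$. Hence the zeros of $\det(1-uB_R)$ (i.e.\ the reciprocals of the eigenvalues of $A_{P,s,\chi}$) are exactly the limits of zeros of $\det(1-uB_{R_t})$, which by the computation above are the zeros of $E(a,b;q,u)$.

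The main obstacle is the clean justification of this last convergence argument: one has to know that no zeros escape to infinity or collide in a pathological way in the limit. The simplest way to handle this is to note that both $\det(1-uB_{R_t})$ and $E(a,b;q,u)$ are entire of Hadamard order zero, and that the uniform-on-compacts convergence together with the explicit form of $E$ (a sum of two entire functions with the correct normalization at $u=0$) forces agreement of zero sets with multiplicity by Hurwitz's theorem. This is the only delicate point; the rest is bookkeeping around Watson's formula and the degeneration ${}_1\phi_1(0;\cdot)=J$.
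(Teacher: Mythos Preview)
Your approach is essentially the paper's: send $t\to 0$ in the displayed identity, using $(at;q)_\infty,(bt;q)_\infty\to 1$ and ${}_1\phi_1(at;\cdot)\to J(\cdot)$. The paper's only additional remark is more concrete than your Hurwitz argument: the displayed equation was derived under the assumption $F(qt)\ne 0$, and the paper disposes of the excluded case directly by observing that if $F(qt)=0$ then $\lambda=t^{-1}q^m$ for some bounded $m$, so those eigenvalues run off to infinity as $t\to 0$ and do not contribute.

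One correction to your last paragraph: the two summands of $E(a,b;q,u)$ are \emph{not} individually entire in $u$; each has an essential singularity at $u=0$ (the theta factor involves $(-q(au)^{-1};q)_\infty$, and the $J$ factor has argument proportional to $u^{-1}$). That $E$ extends holomorphically to the origin is a separate fact, which the paper proves in the Corollary immediately following the Proposition by identifying $E$ with the unique-up-to-scaling power-series solution of the $q$-Bessel equation \eqref{diffeqn}. For the Proposition itself this does not matter (you only need the zero set of $E$ on $\mathbb C^\times$), but your phrase ``a sum of two entire functions with the correct normalization at $u=0$'' should be dropped.
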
 

Indeed, the case $F(qt)=0$ is irrelevant for us since then we must have 
$\lambda=t^{-1}q^m$ for some $m$, and these eigenvalues, if present, go to $\infty$
as $t\to 0$, since $m$ has to remain bounded.  

As before, by growth considerations Proposition \ref{eigefor} implies that 
$$
\det(1-uA_{P,\chi,s})=Cu^mE(a,b;q;u)
$$
for some $C\ne 0$, $m\in \Bbb Z$. In particular, this means that 
$E(a,b;q;u)$ is meromorphic at $0$. Note also that 
the two summands in $E$ form a basis in the space of solutions 
of the $q$-Bessel difference equation transformed by $u\mapsto u^{-1}$ over the field of elliptic functions:
\begin{equation}\label{diffeqn}
abqu^2K(q^2u)-(1+(a+b)u)K(qu)+K(u)=0. 
\end{equation} 
This equation is irregular at $0$, but it has a single, up to scaling, solution $\widetilde E(u)$ meromorphic at $0$, which can be found by the power series method. 
Moreover, it is easy to see that $\widetilde E(u)$ is in fact holomorphic 
at $0$ and does not vanish there (as equation \eqref{diffeqn} degenerates to the equation $K(qu)=K(u)$ near $u=0$). So up to scaling we must have 
$\widetilde E(u)=E(u)$, hence $m=0$. Thus we get 

\begin{corollary} The function $E(a,b; q,u)$ extends holomorphically 
to the origin with $E(a,b; q,0)\ne 0$, and 
$$
\det(1-uA_{P,s,\chi})=\frac{E(a,b; q,u)}{E(a,b;q,0)}.
$$
\end{corollary}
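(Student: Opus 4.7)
The plan is to combine two ingredients: (a) a uniqueness statement for solutions of the $q$-difference equation \eqref{diffeqn} that are meromorphic at the origin, and (b) the determination of the zeros of $\det(1-uA_{P,s,\chi})$ given by Proposition \ref{eigefor}.

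First I would verify that the two summands in the definition \eqref{firstor3} of $E(a,b;q,u)$ each satisfy \eqref{diffeqn}. This follows from the $q$-Bessel difference equation for $J(\cdot;q,\cdot)$ together with the quasi-periodicity $\theta(qu;q)=-u^{-1}\theta(u;q)$, which is exactly how the two summands arise as the confluent limit of the Watson connection formula. Inspecting leading $u$-behavior via $\theta(au;q)$ versus $\theta(bu;q)$ shows these two summands are linearly independent over the field of $q$-elliptic functions (provided $a\ne b$; the coincident case is handled by continuity), so they span the two-dimensional solution space of \eqref{diffeqn} over that field.

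Next I would establish the uniqueness-up-to-scalar of a solution $\widetilde E(u)$ of \eqref{diffeqn} that is holomorphic and nonvanishing at $u=0$. At $u=0$ equation \eqref{diffeqn} degenerates to $K(qu)=K(u)$, so a power series ansatz $\widetilde E(u)=\sum_{n\ge 0}c_n u^n$ gives a regular recursion that determines all $c_n$ uniquely from $c_0$, and the resulting series is entire (since the coefficient of $K(u)$ in \eqref{diffeqn} has a nonzero constant term). Any other solution $K$ meromorphic at $0$ would differ from $\widetilde E$ by multiplication by a function invariant under $u\mapsto qu$, i.e., by a $q$-elliptic function; if such $K$ were itself meromorphic at $0$, the $q$-elliptic ratio would be bounded on a fundamental annulus and hence constant, forcing $K$ to be a scalar multiple of $\widetilde E$.

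Then I would invoke the growth argument sketched in the paragraph above the Corollary: $\det(1-uA_{P,s,\chi})$ is entire of Hadamard order $0$ with value $1$ at the origin, and by Proposition \ref{eigefor} its zeros (with multiplicity) coincide with those of $E(a,b;q,u)$ away from the origin. Writing $\det(1-uA_{P,s,\chi})=Cu^m E(a,b;q,u)$, the previous step shows $E$ is meromorphic at $0$, so both sides are meromorphic at $0$; since the left side is holomorphic and equal to $1$ at $0$, comparing with the unique regular solution $\widetilde E$ forces $m=0$ and identifies $E(a,b;q,u)$ with a nonzero multiple of $\widetilde E$, in particular $E(a,b;q,0)\ne 0$. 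Normalizing so that both sides equal $1$ at $u=0$ yields the displayed formula.

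The main obstacle is rigorously pinning down that $E(a,b;q,u)$ really is the regular solution $\widetilde E$ and not, a priori, a combination that could introduce a pole at $0$. I would handle this by taking an explicit limit: expand the two $\theta$-factors using the Jacobi triple product and recognize that the singularities of $J(\tfrac{aq}{b};q,-\tfrac{q^2}{bu})$ and $J(\tfrac{bq}{a};q,-\tfrac{q^2}{au})$ as $u\to 0$ cancel against the zeros of $\theta(au;q)$ and $\theta(bu;q)$ at $u=0$, term by term. This direct calculation, combined with the uniqueness in the previous paragraph, rules out the meromorphic alternative and completes the proof.
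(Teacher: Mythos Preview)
Your overall strategy matches the paper's exactly: write $\det(1-uA_{P,s,\chi})=Cu^mE(a,b;q,u)$ by Hadamard-order considerations, observe that $E$ therefore is meromorphic at $0$, and use that \eqref{diffeqn} has a \emph{unique} meromorphic-at-$0$ solution $\widetilde E$, which is in fact holomorphic and nonvanishing there, to force $m=0$ and $E=c\widetilde E$.

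However, your justification of the uniqueness step is incorrect. Equation \eqref{diffeqn} is a \emph{second-order} linear $q$-difference equation, so the ratio of two solutions is \emph{not} $q$-periodic; your sentence ``Any other solution $K$ meromorphic at $0$ would differ from $\widetilde E$ by multiplication by a function invariant under $u\mapsto qu$'' is false. The correct argument---which the paper has in mind and which you essentially already set up---is the indicial analysis: plugging a Laurent series $K(u)=u^{-m}\sum_{n\ge 0}c_nu^n$ with $c_0\ne 0$ into \eqref{diffeqn} and looking at the coefficient of $u^{-m}$ gives $(1-q^{-m})c_0=0$, which forces $m=0$. This is what ``the equation is irregular at $0$ and degenerates to $K(qu)=K(u)$'' is really saying.

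Your final paragraph is both unnecessary and based on a misconception. Once uniqueness of the meromorphic-at-$0$ solution is established, the identification $E=c\widetilde E$ is automatic; no direct cancellation computation is needed. Moreover, $\theta(au;q)$ does \emph{not} have a zero at $u=0$: by the triple product it has an essential singularity there, as do the functions $J(\cdot;q,-q^2/(\cdot u))$. So the ``term-by-term cancellation of zeros against singularities'' you describe is not what is happening; the cancellation is of essential singularities, which is precisely why the indirect argument via the difference equation is the right tool.
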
 

In fact, it is shown in \cite{Mo}, Theorem 1 that $E(a,b;q,0)=1$. 
Thus one has 
$$
\det(1-uA_{P,s,\chi})=E(a,b; q,u).
$$

\section{The non-homogeneous case}

\subsection{The trace class property} 

Now let $P\in \Bbb Q_p[x,y]$ be any polynomial (not necessarily homogeneous). Let us first show that if ${\rm Re}(s)$ is sufficiently large then the operator $A_{P,s}$ is trace class. 
First recall the following well known lemma. 

\begin{lemma}\label{trc} Let $A$ be a bounded operator on a Hilbert space $H$ 
and let $A_k\to A$ be a sequence of bounded operators on $H$ of ranks $\le r_k$, $k\ge 0$, such that $r_{k-1}\le r_{k}$. If $M:=\sum_{k\ge 1} r_{k}\norm{A_{k-1}-A}<\infty$
then $A$ is trace class. 
\end{lemma}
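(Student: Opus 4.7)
My plan is to realize $A$ as a norm-convergent telescoping series of finite rank operators and to show that this series actually converges in the (stronger) trace norm. The standard fact I will use is that for a finite rank operator $B$ of rank $\le n$ one has $\norm{B}_1 \le n\norm{B}$, because $B$ has at most $n$ nonzero singular values and each is bounded by the operator norm $\norm{B}$.

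First I would write $A = A_0 + \sum_{k\ge 1}(A_k - A_{k-1})$, which makes sense because the hypothesis $M<\infty$ in particular forces $\norm{A_{k-1}-A}\to 0$, so the series converges in operator norm. Since $A_0$ has rank $\le r_0$ it is already trace class, so it suffices to show that the tail series converges absolutely in trace norm. The operator $A_k - A_{k-1}$ has rank at most $r_k + r_{k-1} \le 2r_k$, and operator norm at most $\norm{A_k-A}+\norm{A_{k-1}-A}$, so the inequality quoted above gives
$$
\norm{A_k - A_{k-1}}_1 \;\le\; 2r_k\bigl(\norm{A_k - A}+\norm{A_{k-1}-A}\bigr).
$$

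Next I would bound the resulting sum. The term $\sum_{k\ge 1} r_k\norm{A_{k-1}-A}$ equals $M$ by hypothesis, and for the other piece I use that $r_k \le r_{k+1}$, so by reindexing
$$
\sum_{k\ge 1} r_k\norm{A_k-A} \;\le\; \sum_{k\ge 1} r_{k+1}\norm{A_k-A} \;=\; \sum_{j\ge 2} r_j\norm{A_{j-1}-A} \;\le\; M.
$$
Hence $\sum_{k\ge 1}\norm{A_k-A_{k-1}}_1 \le 4M < \infty$, so the telescoping series converges absolutely in the Banach space of trace class operators, and its limit is trace class. Since the same series converges in operator norm to $A$, by uniqueness of limits we conclude that $A$ is trace class with $\norm{A}_1 \le \norm{A_0}_1 + 4M$.

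I do not expect any real obstacle here; the only subtlety is the monotonicity bookkeeping that lets me convert the given sum $\sum r_k\norm{A_{k-1}-A}$ into a bound on $\sum r_k\norm{A_k-A_{k-1}}$, and this is precisely the reason for including the hypothesis $r_{k-1}\le r_k$ in the statement.
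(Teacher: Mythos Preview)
Your proof is correct and follows essentially the same approach as the paper: the same telescoping decomposition $A=A_0+\sum_{k\ge 1}(A_k-A_{k-1})$, the same rank bound $2r_k$ and norm bound $\norm{A_k-A}+\norm{A_{k-1}-A}$ on each piece, and the same reindexing via $r_k\le r_{k+1}$ to arrive at $4M$. The only cosmetic difference is that you sum the trace norms $\norm{A_k-A_{k-1}}_1$ directly using $\norm{B}_1\le(\mathrm{rank}\,B)\norm{B}$ and invoke completeness of the trace class, whereas the paper tests against finite rank operators $C$ and bounds $|\mathrm{Tr}(AC)|$; these are two standard and equivalent ways to verify the trace class property.
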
 

\begin{proof} Recall that for any bounded operator $B$ of rank $r$ 
we have ${\rm Tr}(B)\le r\norm{B}$. Now, we have 
$$
A=\lim_{k\to \infty}A_k=A_0+(A_1-A_0)+....+(A_{k}-A_{k-1})+...,  
$$
an absolutely convergent series with respect to the operator norm. So given a finite rank operator $C$, 
we have 
$$
{\rm Tr}(AC)={\rm Tr}(A_0C)+\sum_{k\ge 1}{\rm Tr}((A_{k}-A_{k-1})C). 
$$
Now, the rank of $A_{k}-A_{k-1}$ is at most $r_k+r_{k-1}\le 2r_k$, while its norm is at most $\norm{A_{k-1}-A}+\norm{A_k-A}$. Thus 
$$
|{\rm Tr}(AC)|\le (\norm{A_0}+\sum_{k\ge 1} 2r_k(\norm{A_{k-1}-A}+\norm{A_k-A}))\norm{C}.
$$
Hence
$$
{\rm Tr}|A|\le \norm{A_0}+\sum_{k\ge 1} 2r_k(\norm{A_{k-1}-A}+\norm{A_k-A})\le 
\norm{A_0}+4M.
$$
\end{proof} 

Now let $\gamma_P$ be the log-canonical threshold of $P$ (\cite{Mu}), i.e.,
$$
\gamma_P={\rm sup}(\gamma: {\rm vol}(|P(x,y)|<t)=O(t^\gamma),t\to 0).
$$

\begin{proposition}\label{tracl} If ${\rm Re}(s)>1-\frac{1}{2}\gamma_P$ then $A_{P,s}$ is trace class. 
\end{proposition}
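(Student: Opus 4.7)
The plan is to verify the hypotheses of Lemma \ref{trc} via an approximation of $A_{P,s}$ by finite-rank operators $A_k$ arising from a coset decomposition of $\Bbb Z_p\times\Bbb Z_p$. For each $k\ge 0$ I would partition $\Bbb Z_p\times\Bbb Z_p$ into the cosets $C_{a,b}:=(a+p^k\Bbb Z_p)\times(b+p^k\Bbb Z_p)$ and call $C_{a,b}$ \emph{good} if $|P|$ is constant on it and \emph{bad} otherwise. Since $P\in\Bbb Q_p[x,y]$, a Taylor-type expansion gives $P(x_0+p^ku,y_0+p^kv)=P(x_0,y_0)+p^kR(x_0,y_0,u,v)$ with a remainder $R$ of $p$-adic norm bounded by a constant $C_P$ depending only on $P$, uniformly for $(x_0,y_0)\in\Bbb Z_p^2$ and $u,v\in\Bbb Z_p$. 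Hence $C_{a,b}$ is good whenever $|P(x_0,y_0)|>C_Pp^{-k}$, so the union $M_k$ of all bad cosets lies in $\{|P|\le C_Pp^{-k}\}$. Defining $A_k$ to be the integral operator whose kernel equals $K(x,y):=\frac{|P(x,y)|^s}{1-p^{-1}}$ on good cosets and vanishes on bad ones, its range is contained in the $p^k$-dimensional space of functions constant on cosets $b+p^k\Bbb Z_p$, so ${\rm rank}(A_k)\le p^k$.

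The heart of the proof is estimating $\norm{A_{P,s}-A_k}$ via the Hilbert--Schmidt norm. Decomposing $\{|P|\le C_Pp^{-k}\}$ by level sets $\{|P|=p^{-n}\}$ and applying the log-canonical threshold bound ${\rm vol}(\{|P|\le t\})\le c_\epsilon t^{\gamma_P-\epsilon}$ (valid for any $\epsilon>0$ and $t$ small), the resulting geometric series in $n$ converges under the standing hypothesis ${\rm Re}(s)>1-\gamma_P/2>-\gamma_P/2$, yielding
\[
\norm{A_{P,s}-A_k}_{HS}^2\le\frac{1}{(1-p^{-1})^2}\int_{|P|\le C_Pp^{-k}}|P(x,y)|^{2{\rm Re}(s)}\,|dx|\,|dy|\le Cp^{-k(2{\rm Re}(s)+\gamma_P-\epsilon)}.
\]
Consequently $\norm{A_{P,s}-A_{k-1}}\le C'p^{-k({\rm Re}(s)+\gamma_P/2-\epsilon/2)}$.

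Feeding these into Lemma \ref{trc} with $r_k=p^k$, the convergence condition becomes
\[
\sum_{k\ge 1}p^k\norm{A_{P,s}-A_{k-1}}\le C''\sum_{k\ge 1}p^{k(1-{\rm Re}(s)-\gamma_P/2+\epsilon/2)}<\infty,
\]
which holds for any ${\rm Re}(s)>1-\gamma_P/2$ once $\epsilon$ is chosen sufficiently small; this proves $A_{P,s}$ is trace class. The main technical step is producing the constant $C_P$ that traps the bad cosets inside $\{|P|\le C_Pp^{-k}\}$ uniformly in $(x_0,y_0)$; afterwards the log-canonical threshold does all the real work.
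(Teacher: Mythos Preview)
Your proof is correct and follows the same strategy as the paper's: approximate $A_{P,s}$ by finite-rank operators, bound the ranks via a Lipschitz estimate for $P$, bound $\norm{A-A_k}$ via the Hilbert--Schmidt norm and the log-canonical threshold, then invoke Lemma \ref{trc}. The only cosmetic difference is the choice of approximants: the paper takes $A_k$ to be the integral operator with kernel $\tfrac{1}{1-p^{-1}}\max(p^{-k},|P(x,y)|)^s$ (truncation from below) rather than zeroing out the kernel on ``bad'' cosets, but both constructions lead to the same rank and norm estimates.
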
 

\begin{proof} 
Let ${\rm Re}(s)=\rho>0$. Let $A_k$ be the integral operator 
on $\Bbb Z_p$ with kernel $D_k(x,y):=\frac{1}{1-p^{-1}}\max(p^{-k},|P(x,y)|)^s$.
It is clear that $\norm{A-A_k}$ is dominated by its Hilbert-Schmidt norm, 
hence 
$$
\norm{A-A_k}\le C p^{-k(\rho+\frac{1}{2}\gamma_P)}. 
$$
for large $k$. 
On the other hand, we have 
$$
P(x,y)-P(x',y')=P_1(x,y)(x-x')+P_2(x',y)(y-y')
$$
for some polynomials $P_1,P_2$, so 
$$
|P(x,y)-P(x',y')|< p^{\ell}\max(|x-x'|,|y-y'|)
$$
for some $\ell$. This implies that if $|P(x,y)|=p^{-k}$ and 
$|x'-x|,|y'-y|\le p^{-k-\ell}$ then $|P(x',y')|=|P(x,y)|$.
Thus if $|x'-x|,|y'-y|\le p^{-k-\ell}$ then $D_k(x,y)=D_k(x',y')$. 
We conclude that the rank of $A_k$ is $\le p^{k+\ell}$.  
Thus by Lemma \ref{trc}, $A$ is trace class if $\rho>1-\frac{1}{2}\gamma_P$. 
\end{proof} 

For example, if $P$ has zeros in $\Bbb Z_p^2$ but they are all smooth points of 
the curve $P(x,y)=0$ then $\gamma_P=1$ and $A_{P,s}$ is trace class whenever ${\rm Re}(s)>\frac{1}{2}$. 

\subsection{An example} Proposition \ref{tracl} allows us to define the characteristic function $h_{P,s}(u)$ for a general polynomial $P$. Computing this function, however, seems difficult in general, as we no longer have a decomposition into the sectors $H_\chi$, and even when we do, the answer goes beyond the theory of $q$-hypergeometric functions. 

To demonstrate what may happen in the non-homogeneous case, consider the polynomial 
$$
P(x,y)=x^{2d}+y^{2}, 
$$ 
where $d>1$ and $p=4k+3$. We have $|P(x,y)|=\max(|x|^d,|y|)$. 
Thus the corresponding operator $A_{P,s}$ still commutes with 
$\Bbb Z_p^\times$ and $A_{P,s,\chi}=0$ if $\chi\ne 1$, while  
the matrix of $A=A_{P,s,1}$ is 
$$
a_{mn}=p^{-\frac{m+n}{2}-2s\min(m,dn)}. 
$$

For simplicity assume that $s\ge 0$. In terms of analytic functions $F(z)$ we get 
$$
(AF)(z)=\sum_{n\ge 0}\sum_{m>dn}z^mp^{-2dns-\frac{n}{2}}f_n
+\sum_{n\ge 0}\sum_{m\le dn}z^mp^{-2ms-\frac{n}{2}}f_n=
$$
$$
\frac{(1-p^{-2s})z}{(1-z)(1-p^{-2s}z)}F(p^{-2ds-1}z^d)+\frac{F(p^{-1})}{1-p^{-2s}z}.
$$
So the eigenvalue equation looks like 
$$
\lambda F(z)=\frac{(1-p^{-2s})z}{(1-z)(1-p^{-2s}z)}F(p^{-2ds-1}z^d)+\frac{F(p^{-1})}{1-p^{-2s}z}.
$$
Thus $\lambda F(0)=F(p^{-1})$. If $F(p^{-1})=0$ then $F=0$, so $F(p^{-1})\ne 0$, hence $F(0)\ne 0$, $\lambda\ne 0$. So we may set $F(0)=1$, which gives $\lambda=F(p^{-1})$. Hence we get 
$$
F(z)=\frac{1}{1-p^{-2s}z}+\lambda^{-1}\frac{(1-p^{-2s})z}{(1-z)(1-p^{-2s}z)}F(p^{-2ds-1}z^d).
$$
Let $s>-\frac{1}{2d}$ and 
$q=p^{-\frac{2s+1}{d-1}}$. Then, replacing $z$ by $p^{-1}q^{-d}z$, we have 
$$
F(p^{-1}q^{-d}z)=\frac{1}{1-q^{-1}z}+\lambda^{-1}\frac{(p^{-1}q^{-d}-q^{-1})z}{(1-p^{-1}q^{-d}z)(1-q^{-1}z)}F(p^{-1}q^{-d}z^d).
$$
So setting $G(z):=F(p^{-1}q^{-d}z)$ and $u=\lambda^{-1}$, we get 
\begin{equation}\label{Geq}
G(z)=\frac{1}{1-q^{-1}z}+u\frac{(p^{-1}q^{-d}-q^{-1})z}{(1-p^{-1}q^{-d}z)(1-q^{-1}z)}G(z^d).
\end{equation} 
This shows that $G$ is a {\bf $d$-Mahler series} (of degree $2$) in the sense of \cite{BCCD}, which is a generalization of the notion of $d$-automatic and $d$-regular series (see \cite{AS}). 

Iterating \eqref{Geq}, for any $u$ we get 
a solution 
$$
G(z,u):=\frac{1}{1-q^{-1}z}+u\frac{(p^{-1}q^{-d}-q^{-1})z}{(1-p^{-1}q^{-d}z)(1-q^{-1}z)(1-q^{-1}z^d)}+
$$
$$
u^2
\frac
{(p^{-1}q^{-d}-q^{-1})^2z^{d+1}}
{(1-p^{-1}q^{-d}z)(1-p^{-1}q^{-d}z^d)(1-q^{-1}z)(1-q^{-1}z^d)(1-q^{-1}z^{d^2})}
+...;
$$
Namely, it is clear that this series converges (uniformly on every compact set of points $(z,u)$) in the region $|z|<\min(|q|,|pq^d|)$. Now set $z=q^d$, which belongs to this region. 
So setting $\beta:=u(q^{-1}-p^{-1}q^{-d})$, we obtain
$$
1=\frac{\beta q}{(1-p^{-1}q^{1-d})(1-q^{d-1})}-\frac{\beta^2 q^{d+1}}{(1-p^{-1}q^{1-d})(1-p^{-1})(1-q^{d-1})(1-q^{d^2-1})}
$$
$$
+\frac
{\beta^3q^{d^2+d+1}}
{(1-p^{-1}q^{1-d})(1-p^{-1})(1-p^{-1}q^{d^2-d})(1-q^{d-1})(1-q^{d^2-1})(1-q^{d^3-1})}
+...
$$
Define the ``automatic Pochhammer symbol" 
$$
[a;q]_{d,n}=(1-aq)(1-aq^d)(1-aq^{d^2})....(1-aq^{d^{n-1}}). 
$$
Let 
$$
K(a; q,u)=\sum_{n=0}^\infty \frac{(-1)^nq^{\frac{d^n-1}{d-1}}u^n}{[a;q]_{d,n}[q^{-1};q^d]_{d,n}}
$$
(an entire function). 
We obtain 

\begin{proposition} We have 
$$
\det (1-uA_{P,s,1})=K(p^{-1}q^{-d}; q, (q^{-1}-p^{-1}q^{-d})u).
$$
Thus the eigenvalues $A_{P,s,1}$ are the solutions of the equation 
$$
K(p^{-1}q^{-d}; q, (q^{-1}-p^{-1}q^{-d})\lambda^{-1})=0.
$$
\end{proposition}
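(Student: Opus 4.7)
The plan is to carry out, in this non-homogeneous example, the same strategy as in the homogeneous case: transfer $A_{P,s,1}$ to an operator on the Hilbert space $\mathcal H$ of analytic functions, derive a functional equation for the eigenvector, iterate it into a power series, and match the consistency condition at a distinguished point with the defining series of $K$.

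First I would justify that $h_{P,s,1}(u):=\det(1-uA_{P,s,1})$ is a well-defined entire function. The log-canonical threshold of $P(x,y)=x^{2d}+y^2$ is $\gamma_P=(d+1)/(2d)$, obtained from ${\rm vol}\{|x^{2d}+y^2|<t\}\asymp t^{(d+1)/(2d)}$, so Proposition \ref{tracl} yields trace class for ${\rm Re}(s)>(3d-1)/(4d)$. Next, writing $F(z)=\sum_{n\ge 0}p^{n/2}f_nz^n$, using $|P(x,y)|=\max(|x|^{2d},|y|^2)$, and splitting the double sum at $m=dn$, one recovers the displayed formula for $(AF)(z)$. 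Setting $z=0$ in the eigenvalue equation $\lambda F=AF$ gives $\lambda F(0)=F(p^{-1})$, so for a nonzero eigenvalue the normalization $F(0)=1$ yields $\lambda=F(p^{-1})$, and the substitution $G(z):=F(p^{-1}q^{-d}z)$ with $q:=p^{-(2s+1)/(d-1)}$ brings the equation into the Mahler form \eqref{Geq}.

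Iterating \eqref{Geq} in $u=\lambda^{-1}$ produces the explicit series $G(z,u)$ displayed in the text, and an elementary geometric-series majorant gives absolute convergence on $|z|<\min(|q|,|pq^d|)$; since $|q|<1$ and $p>1$, the point $z=q^d$ lies inside this disk. The consistency condition $G(q^d,u)=F(p^{-1})=\lambda=u^{-1}$ reads $1-uG(q^d,u)=0$. The crucial algebraic identity is $q\beta=u(1-p^{-1}q^{1-d})=u(1-aq)$, where $a:=p^{-1}q^{-d}$ and $\beta:=(q^{-1}-p^{-1}q^{-d})u$. A term-by-term comparison -- in which this identity telescopes with the leading factor $(1-aq)$ of $[a;q]_{d,n}$, the substitution $z=q^d$ converts the remaining denominators of $G$ into $[q^{-1};q^d]_{d,n}$, and the accumulated powers $z^{1+d+\cdots+d^{n-1}}=q^{(d^n-1)/(d-1)}$ produce the $q$-exponent in $K$ -- identifies $1-uG(q^d,u)$ with $K(a;q,\beta)$. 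This shows that the reciprocal eigenvalues of $A_{P,s,1}$ are precisely the zeros of $K\bigl(p^{-1}q^{-d};q,(q^{-1}-p^{-1}q^{-d})u\bigr)$, and that each eigenvalue has geometric multiplicity one.

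To promote this zero-set identification to the full determinant identity, one observes that both sides are entire of order $0$ with value $1$ at $u=0$: for $K$ because its coefficients decay like $|q|^{d^n/(d-1)}$ (super-exponentially in $n$), and for $h_{P,s,1}$ by the trace-class property together with the zero-set description just obtained. Hadamard's factorization theorem for entire functions of order $0$ then yields equality once algebraic multiplicities are matched. The main anticipated obstacle is precisely this multiplicity matching: geometric multiplicity one is immediate from the uniqueness of the normalized eigenvector, but algebraic multiplicity requires extra input. The cleanest route is to observe that for generic $s$ the zeros of $K$ are simple (so algebraic and geometric multiplicities coincide and both equal one), deduce the identity in that range by Hadamard, and extend the resulting equality by analytic continuation in $p^s$ to the full parameter region.
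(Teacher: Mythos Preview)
Your proof is correct and follows essentially the same route as the paper: derive the Mahler-type functional equation, iterate, evaluate at $z=q^d$ to identify $1-uG(q^d,u)$ with $K$, and invoke the Hadamard order-$0$ argument with value $1$ at the origin. The paper's own proof is a single sentence noting that $K$ has rapidly decaying Taylor coefficients (hence order $0$) and leaves the multiplicity matching implicit, so your treatment is if anything more careful than the original.
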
 

\begin{proof} Indeed, the right hand side is an entire function of order $0$ since its Taylor coefficients are rapidly decaying. 
\end{proof}

\end{document}